\newcommand{\rmidarrow}{\tikz \draw[thick,-{Computer Modern Rightarrow}] (-1pt,0) --(1pt,0);}	
\newcommand{\lmidarrow}{\tikz \draw[thick, -{Computer Modern Rightarrow}] (1pt,0) -- (-1pt,0);}
\newcommand{\Nwmidarrow}{\tikz \draw[thick, -{Computer Modern Rightarrow}] (0,0) -- (2pt,2pt);}
\newcommand{\Nmidarrow}{\tikz \draw[thick, -{Computer Modern Rightarrow}] (0,0) -- (0,1);}
\newcommand{\Nemidarrow}{\tikz \draw[thick, -{Computer Modern Rightarrow}] (0,0) -- (-2pt, 2pt);}
	\newtheorem{thm}{Theorem}[section]
	\newtheorem{Observation}{Observation}[thm]
	\newtheorem{lem}[thm]{Lemma}
	\newtheorem{prop}[thm]{Proposition}
	\newtheorem{cor}[thm]{Corollary}
	\newtheorem{rem}[thm]{Remark}
	\newtheorem{rems}[thm]{Remarks}
	\newtheorem{Def}[thm]{Definition}
\begin{document}
	
\title[]{Quantum Symmetries of Graph $C^*$-algebras Having Maximal Permutational Symmetry}
\author{Ujjal Karmakar \MakeLowercase{and} Arnab Mandal}

\maketitle

\begin{abstract}
	     Quantum symmetry of a graph $C^{*}$-algebra $C^{*}(\Gamma)$ corresponding to a finite graph $\Gamma$ has been explored by several mathematicians within different categories in the past few years. In this article, we establish that there are exactly three families of compact matrix quantum groups, containing the symmetric group on the set of edges of the underlying graph $\Gamma$, that can be achieved as the quantum symmetries of graph $C^*$-algebras in the category introduced by Joardar and Mandal. 
	     Moreover, we demonstrate that there does not exist any graph $C^*$-algebra associated with a finite graph $\Gamma$ without isolated vertices having $A_{u^t}(F^{\Gamma})$ as the quantum automorphism group of $C^*(\Gamma)$ for a non-scalar matrix $F^{\Gamma}$.\\
\end{abstract}
 
\noindent \textbf{Keywords:} Compact matrix quantum group, Quantum automorphism group, Graph $C^*$-algebra.\\[0.2cm]
\textbf{AMS Subject classification:} 46L67,	46L89, 58B32. 
 
 \section{Introduction}
Compact matrix quantum groups, introduced by S.L. Woronowicz in 1987 \cite{Wor}, can be viewed as a generalization of symmetries for some non-commutative spaces. In 1995, A. Connes posed the problem of identifying an appropriate notion of the quantum automorphism group for non-commutative spaces to extend the concept of classical group symmetry to a `non-commutative version of symmetry' \cite{connes}. A few years later, S. Wang formulated the notion of a quantum symmetry group for $X_n$, a space containing $n$ points, in a categorical language \cite{Wang}. While the classical automorphism group of $X_n$ is simply the permutation group $S_n$, he showed that the underlying $C^*$-algebra of the quantum automorphism group is infinite-dimensional and non-commutative for $n \geq 4$ (see \cite{Wang}).
 Between 2003 and 2005, J. Bichon and T. Banica extended the quantum symmetry structure for a simple finite graph \cite{Ban, Bichon}. Since then, extensive studies on the quantum symmetries of graphs have been conducted in \cite{Banver}, \cite{Lupini}, \cite{SSthesis}, \cite{Fulton}, \cite{Qauttrees}, \cite{lexico} etc. The notion of a quantum symmetry group for continuous structures within an infinite-dimensional framework of non-commutative geometry was formulated by D. Goswami in \cite{Laplace}.\\

 On the other hand, a graph C*-algebra, an extension of the Cuntz-Krieger algebra \cite{Cuntz, CK}, is a universal C*-algebra generated by orthogonal projections and partial isometries associated with a given directed graph. Notably, a large family of $C^*$-algebras, including matrix algebras, Cuntz algebras, Toeplitz algebra, quantum spheres, quantum projective spaces and more, can be realized as graph $C^*$-algebras. More interestingly, various operator algebraic properties of $C^*$-algebras can be derived from the underlying graph and vice versa.
 It is worth mentioning that a graph $C^*$-algebra can be recognized as a Cuntz-Krieger algebra if the underlying finite graph has no sinks (see \cite{Kumjian}, \cite{Tom}, \cite{Raeburn}).\\
 
 In 2017, S. Schmidt and M. Weber came up with the notion of the quantum symmetry of a graph $C^*$-algebra $C^*(\Gamma)$ by showing that the quantum automorphism group of its underlying graph $\Gamma$ in the sense of Banica, denoted by $QAut_{Ban}(\Gamma)$, acts maximally on $C^{*}(\Gamma)$ \cite{Web}. Later, S. Joardar and A. Mandal defined the quantum symmetry of a graph $C^*$-algebra considering a category with an analytic nature  \cite{Mandal}. They showed in \cite{Mandal} that the quantum symmetry group of $C^*(\Gamma)$, denoted by  ${Q}_{\tau}^{Lin}(\Gamma)$, is always a quantum subgroup of $A_{u^t}(F^{\Gamma})$ for an invertible diagonal matrix $F^{\Gamma}$ of order $|E(\Gamma)|$, where $E(\Gamma)$ denotes the edge set of $\Gamma$. Moreover, ${Q}_{\tau}^{Lin}(\Gamma)$ always contains the quantum group $ (\underbrace{C(S^{1})* \cdots *C(S^{1})}_{|E(\Gamma)| \textendash times}, \Delta)$, and we have already produced a class of graph $C^*$-algebras that exhibit minimal quantum symmetry (i.e., isomorphic to $ (\underbrace{C(S^{1})* \cdots *C(S^{1})}_{|E(\Gamma)| \textendash times}, \Delta)$) in \cite{rigidity}. Similarly, one can raise the following question:
 \begin{itemize}
 	\item When does ${Q}_{\tau}^{Lin}(\Gamma)$ achieve the maximal quantum symmetry $A_{u^t}(F^{\Gamma})$? \\
 \end{itemize} 
\noindent However, we have shown in Theorem \ref{AUtF} that there does not exist any graph $C^*$-algebra whose quantum symmetry group is (identical to) $A_{u^t}(F^{\Gamma})$ for a non-scalar matrix $F^{\Gamma}$. Hence, for a given graph $\Gamma$, the only possible choice for maximal quantum symmetry is the free unitary group $U_{|E(\Gamma)|}^+$. Additionally, Theorem 4.12 of \cite{Mandal} ensures that $U_n^+$ appears as the quantum symmetry of graph $C^*$-algebra $C^*(L_n)$ associated with the graph $L_n$ (see Figure \ref{Ln}). Let us now introduce the following terminology for better exposition. A graph $C^*$-algebra $C^*(\Gamma)$ has {\bf maximal permutational symmetry} if its quantum symmetry group ${Q}_{\tau}^{Lin}(\Gamma)$ contains the symmetric group on the edge set of $\Gamma$. More precisely, $C^*(\Gamma)$ has maximal permutational symmetry if $S_{|E(\Gamma)|}$ is a quantum subgroup of ${Q}_{\tau}^{Lin}(\Gamma)$.  Since $S_{n}$ is a quantum subgroup of $U_{n}^+$, it follows from the above discussion that $C^*(L_n)$ has maximal permutational symmetry. This observation leads to the following question:
\begin{itemize}
	\item Can we classify all graph $C^*$-algebras having maximal permutational symmetry and identify their corresponding quantum symmetry groups?\\
\end{itemize} 
In the context of the quantum symmetry for a finite graph, since the action of $QAut_{Ban}(\Gamma)$ depends on the vertex set $V(\Gamma)$ of its underlying graph $\Gamma$, similar questions can be framed as follows: (a) Can one identify all the graphs $\Gamma$ such that their respective quantum symmetry groups are isomorphic to $S_{|V(\Gamma)|}^+$ (the maximal quantum symmetry)? (b) Can we classify all the graphs $\Gamma$ whose quantum symmetry groups $QAut_{Ban}(\Gamma)$ contain the classical symmetry groups $S_{|V(\Gamma)|}$?  In this scenario, there are exactly two graphs with $n$ vertices, namely $K_n$ (complete graph with $n$ vertices) and $K_n^{c}$ (graph with $n$ isolated vertices), for which $QAut_{Ban}(K_n) \cong QAut_{Ban}(K_n^{c}) \cong S_n^+$ (see \cite{Banver}). Moreover, these are the only two graphs whose quantum symmetry group $S_n^+$ contains $S_n$. In this article, we classify all the graph $C^*$-algebras which admit maximal permutational symmetry.\\
 
 Now, we briefly discuss the presentation of this article as follows: In the second section, the prerequisites related to graph $C^*$-algebras, compact quantum groups, the action of a compact quantum group on a $C^*$-algebra, and quantum automorphism groups are covered. Additionally, we revisit the quantum symmetry of a graph $C^*$-algebra with some definitions and proven results within the category introduced in \cite{Mandal}. In the third section, using some auxiliary lemmas and propositions, we identify all the graph $C^*$-algebras having maximal permutational symmetry, as summarized in Theorem \ref{ET1} and Table \ref{Table}. Finally, Theorem \ref{AUtF} establishes that there does not exist any graph $C^*$-algebra associated with a finite graph $\Gamma$ (without isolated vertices) having $A_{u^t}(F^{\Gamma})$ as the quantum automorphism group of $C^*(\Gamma)$ for a non-scalar matrix $F^{\Gamma}$.  
 
\section{Preliminaries}

\subsection{Notations and conventions}
For a set $A$, the cardinality of $A$ will be denoted by $|A|$, and  the identity function on $A$ will be denoted by $id_{A}$. For a $C^*$-algebra $ \mathcal{C}$, $\mathcal{C}^*$  is the set of all linear bounded functionals on $ \mathcal{C} $. For a set $A$, span($A$) will denote the linear space spanned by the elements of $A$. The tensor product `$\otimes$' denotes the spatial or minimal tensor product between $C^*$-algebras. $I_n$ denotes the identity matrix of $M_n(\mathbb{C})$. All the $C^*$-algebras are unital for us. Throughout this article, all the graphs are assumed to be directed graphs.

\subsection{Graph $C^{*}$-algebras}
In this subsection, we will recall some basic definitions regarding directed graphs and preliminary facts about graph $C^{*}$-algebras from \cite{Pask}, \cite{Raeburn}, \cite{BPRS} and \cite{MS}.\\
A directed graph $\Gamma=\{ V(\Gamma), E(\Gamma),s,r \}$ consists of countable sets $ V(\Gamma) $ of vertices and $ E(\Gamma)$ of edges together with the maps $s,r: E(\Gamma) \to V(\Gamma) $ describing the source and range of the edges. For $v,w \in V(\Gamma)$, the notation `$v \to w $' means that there exists an edge $e \in E(\Gamma)$ such that  $ v=s(e)$ and $ w=r(e) $. We say that $v$ and $w$ are {\bf adjacent} (denoted by $v \sim w$) if $v \to w$ or $w \to v$. A graph is said to be \textbf{finite} if both $|V(\Gamma)|$ and $|E(\Gamma)|$ are finite. For $v \in V(\Gamma)$, {\bf indegree} of $v$ (in short, $\mathbf{indeg(v)}$) denotes the cardinality of the set $r^{-1}(v)$. A vertex $v$ is called a {\bf sink} if $s^{-1}(v)= \emptyset$. A vertex $v$ is called a {\bf rigid source} if $r^{-1}(v)= \emptyset$. A \textbf{path} $\alpha$ of length $n$ in a directed graph $\Gamma$ is a sequence $\alpha=e_{1}e_{2} \cdots e_{n} $ of edges in $\Gamma$ such that $ r(e_{i}) = s(e_{i+1}) $ for $1 \leq i \leq (n-1) $. We define $s(\alpha):=s(e_{1})$ and  $r(\alpha):=r(e_{n})$. A \textbf{loop} is an edge $l$ with $s(l)=r(l)$. An edge $e \in E(\Gamma)$ is said to be a {\bf multiple edge} if there exists another $f \in E(\Gamma)$ such that $s(e)=s(f)$ and $r(e)=r(f)$. A directed \textbf{graph without isolated vertices} \footnote{Note that in \cite{Mandal}, the terminology ``connected" was used in the sense of a graph without isolated vertices.} means for every vertex $v \in V(\Gamma) $ at least one of $s^{-1}(v)$ or $r^{-1}(v)$ is non-empty. A directed graph is said to be \textbf{connected} \footnote{In this article, authors use the term ``connected" in the usual graph-theoretic sense, which is different from \cite{Mandal}.} if given any two distinct vertices $v$ and $w$, either $v \sim w$ or there exists a finite sequence of vertices $v_1, v_2,...,v_n$ such that $v \sim v_1 \sim v_2 \sim \cdots \sim v_n \sim w$.\\
Let $\Gamma=\{V(\Gamma),E(\Gamma),s,r \}$ be a finite, directed graph with $|V(\Gamma)|=n$. The adjacency matrix of $\Gamma$ with respect to the ordering of the vertices $ (v_{1},v_{2},..., v_{n}) $ is a matrix $ (a_{ij})_{i,j= 1,2,...,n} $ with 
$ a_{ij} =
\begin{cases}
n(v_{i},v_{j}) & if ~~ v_{i} \to v_{j} \\
0 &  ~~ otherwise 
\end{cases} $ where $ n(v_{i},v_{j})$ denotes the number of edges with source $v_{i}$ and range $v_{j}$.\\ 

\noindent In this article, we will define the graph $C^*$-algebra only for a finite, directed graph.  
\begin{Def}
	Given a finite, directed graph $\Gamma$, the graph $C^{*}$-algebra $C^{*}(\Gamma)$ is a universal $C^{*}$-algebra generated by partial isometries $ \{S_{e}: e \in E(\Gamma) \} $ and orthogonal projections $ \{p_{v}: v \in V(\Gamma) \}$ such that \vspace{0.1cm}
	\begin{itemize}
		\item[(i)] $S_{e}^{*}S_{e}=p_{r(e)}$ for all $ e \in E(\Gamma) $, \vspace{0.1cm}
		\item[(ii)] $p_{v}=\sum\limits_{\{f:s(f)=v\}}S_{f}S_{f}^{*}$ for all $ v \in V(\Gamma) $ if $s^{-1}(v) \neq \emptyset $.\\
	\end{itemize}
\end{Def} 

 \noindent For a given graph $C^{*}$-algebra, we have the following properties which will be used later.
\begin{prop} \label{properties}
	For a finite, directed graph $\Gamma=\{V(\Gamma),E(\Gamma),s,r \}$,
	\begin{itemize}
		\item[(i)] $S_{e}^{*}S_{f}=0$  for all $e \neq f $.\vspace{0.1cm}
		\item[(ii)] $\sum\limits_{v \in V(\Gamma)}p_{v}=1$.\vspace{0.1cm}
		\item[(iii)] $ S_{e}S_{f}\neq 0 \Leftrightarrow r(e)=s(f)$ i.e., $ef \text{ is a path} $.     \\
		Moreover, $S_{\gamma}:=S_{e_{1}}S_{e_{2}}...S_{e_{k}} \neq 0 \Leftrightarrow r(e_{i})=s(e_{i+1}) $ for $i=1,2,...,(k-1)$ i.e., $ \gamma=e_{1}e_{2}...e_{k}$ is a path.\vspace{0.1cm}
		\item[(iv)] $ S_{e}S_{f}^{*}\neq 0 \Leftrightarrow r(e)=r(f). $\vspace{0.1cm}
		\item[(v)] If $\gamma=e_1e_2...e_k$ is a path and $S_{\gamma}:=S_{e_{1}}S_{e_{2}}...S_{e_{k}}$, then $S_{\gamma}^* S_{\gamma}=p_{r(e_k)}$. \vspace{0.1cm}
		\item[(vi)] $ span\{S_{\gamma}S_{\mu}^{*} : \gamma, \mu \in E^{< \infty}(\Gamma) \text{ with } r(\gamma)=r(\mu)\} $ is dense in $C^{*}(\Gamma)$, where $E^{< \infty}(\Gamma)$ denotes the set of all finite length paths.\\
	\end{itemize} 
\end{prop}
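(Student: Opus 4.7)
The proof exploits two tools that follow from the defining relations: the \emph{absorption identities} $S_e = p_{s(e)}S_e = S_e p_{r(e)}$ (for the first, observe that $p_{s(e)} - S_eS_e^* = \sum_{g \ne e,\, s(g)=s(e)} S_g S_g^* \ge 0$, so $p_{s(e)} S_e S_e^* = S_e S_e^*$, which combined with $S_e = S_eS_e^*S_e$ gives the identity; the second is immediate from $S_e^* S_e = p_{r(e)}$), together with the pairwise orthogonality of the vertex projections $\{p_v\}_{v \in V(\Gamma)}$, which is the standard meaning of ``a family of orthogonal projections''.

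For (i) I would split on $s(e) \stackrel{?}{=} s(f)$. If the sources differ, $S_e^* S_f = S_e^* p_{s(e)} p_{s(f)} S_f = 0$. If $s(e) = s(f) = v$, relation (ii) of the definition writes $p_v$ as a \emph{finite} sum $\sum_g S_g S_g^*$ of projections equal to a projection, which forces the summands to be pairwise orthogonal (a short $C^*$-algebraic computation); then $S_e S_e^* S_f S_f^* = 0$, and right-multiplying by $S_f$ and left-multiplying by $S_e^*$ collapses this to $S_e^* S_f = 0$ after using $S_e^* S_e S_e^* = S_e^*$ and $S_f p_{r(f)} = S_f$. Items (iii) and (iv) follow the same template: the ``product-is-zero'' direction uses absorption, e.g.\ $S_e S_f = S_e p_{r(e)} p_{s(f)} S_f = 0$ unless $r(e)=s(f)$. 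Statement (v) is an easy induction on the length of $\gamma$, peeling off $S_{e_i}^* S_{e_i} = p_{r(e_i)} = p_{s(e_{i+1})}$ and absorbing into $S_{e_{i+1}}$. Item (ii) is part of the standard unital convention for the universal $C^*$-algebra of a finite graph and is taken as a defining relation.

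The main obstacle is the ``product-is-nonzero'' direction of (iii) and (iv) and, implicitly, the fact (used in (v)) that $S_\gamma \ne 0$ for every honest path: nothing in the abstract universal presentation prevents such cancellations, so one must exhibit a concrete faithful representation. The standard path-space representation on $\ell^2(E^{<\infty}(\Gamma))$ together with a gauge-invariant uniqueness theorem accomplishes this; at this point it is most efficient to invoke the references \cite{Raeburn, BPRS} rather than reproduce the construction.

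Finally, for (vi), I would verify that $\mathcal{A} := \mathrm{span}\{S_\gamma S_\mu^* : \gamma, \mu \in E^{<\infty}(\Gamma),\ r(\gamma)=r(\mu)\}$ (adopting the convention $S_v := p_v$ for length-zero paths at $v$) is a $*$-subalgebra of $C^*(\Gamma)$ containing every generator. Closure under $*$ is manifest, and a product $(S_\gamma S_\mu^*)(S_\alpha S_\beta^*)$ is analysed by comparing $\mu$ and $\alpha$ letter by letter: using (i) and (iii), the middle factor $S_\mu^* S_\alpha$ collapses either to zero (if $\mu$ and $\alpha$ disagree before either terminates), to a path or its adjoint (if one is a prefix of the other), or to a vertex projection (if $\mu = \alpha$). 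In every case the resulting word lies in $\mathcal{A}$, so $\mathcal{A}$ is a dense $*$-subalgebra, establishing (vi).
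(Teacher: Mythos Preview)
The paper does not actually prove Proposition~\ref{properties}: it is stated as a collection of standard facts about graph $C^*$-algebras, with the references \cite{Pask}, \cite{Raeburn}, \cite{BPRS}, \cite{MS} cited at the head of the subsection serving as the source. Your sketch is a correct outline of the usual arguments found in those references (absorption identities, orthogonality of range/source projections, the path-space model for nonvanishing, and the $*$-subalgebra verification for density), so there is nothing to compare against and nothing substantive to correct.
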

 
\noindent The lemma below discusses the independence of the partial isometries corresponding to each path of length 2.
\begin{lem}
	The set $ \mathcal{E}^{(2)} := \{S_e S_f : e,f \in E(\Gamma) \text{ with } r(e)=s(f)\}$ is a linearly independent set. \label{independent}
\end{lem}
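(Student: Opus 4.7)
The plan is to reduce the problem to the orthogonality of the partial isometries associated with length-two paths and conclude via the non-vanishing of the vertex projections. Concretely, I would establish the relation
\begin{equation*}
(S_e S_f)^*(S_{e'} S_{f'}) \;=\; \delta_{(e,f),(e',f')}\, p_{r(f)}
\end{equation*}
for all admissible length-two paths $ef$ and $e'f'$ (i.e., $r(e)=s(f)$ and $r(e')=s(f')$). Granted this, any null combination $\sum \lambda_{e,f}\, S_e S_f = 0$, when paired on the left with a single $(S_{e_0} S_{f_0})^{*}$, collapses immediately to $\lambda_{e_0, f_0}\, p_{r(f_0)} = 0$, forcing $\lambda_{e_0, f_0}=0$ provided $p_{r(f_0)} \neq 0$.

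The orthogonality itself is a two-step application of Proposition \ref{properties}(i): I would expand
\begin{equation*}
S_f^{*} S_e^{*} S_{e'} S_{f'} \;=\; \delta_{e,e'}\, S_f^{*}\, p_{r(e)}\, S_{f'}.
\end{equation*}
When $e=e'$, admissibility forces $r(e)=s(f)=s(f')$, so using that $p_{s(f')}\,S_{f'}=S_{f'}$ (a direct consequence of the Cuntz-Krieger relation (ii) together with (i), or equivalently of the partial-isometry identity $S_{f'} S_{f'}^{*} S_{f'} = S_{f'}$), the expression reduces to $S_f^{*} S_{f'} = \delta_{f,f'}\, p_{r(f)}$.

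The only non-formal ingredient is the fact that every vertex projection $p_v$ is non-zero in the universal $C^{*}$-algebra $C^{*}(\Gamma)$ of a finite directed graph, which is a standard consequence of the gauge-invariant uniqueness theorem (see e.g.\ \cite{Raeburn}). This is the sole step I regard as a genuine, if mild, obstacle: without it, the pairing argument would be vacuous. Everything else is bookkeeping with the relations already recorded in Proposition \ref{properties}, and the whole proof fits in a few lines once the orthogonality identity is in place.
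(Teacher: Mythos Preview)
Your proposal is correct and follows essentially the same approach as the paper: both arguments left-multiply a null combination by the adjoint of a fixed length-two partial isometry and use Proposition~\ref{properties}(i) together with $p_{s(f')}S_{f'}=S_{f'}$ to collapse everything to $c_{ef}\,p_{r(f)}=0$. Your version packages the computation as a single orthogonality identity $(S_eS_f)^*(S_{e'}S_{f'})=\delta_{(e,f),(e',f')}p_{r(f)}$, whereas the paper multiplies by $S_e^*$ and $S_f^*$ in two separate steps, but the underlying mechanism is identical; your explicit remark that $p_{r(f)}\neq 0$ (via gauge-invariant uniqueness) is the one point the paper leaves tacit.
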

\begin{proof}
	For fixed $e,f \in E(\Gamma)$ with $r(e)=s(f)$, let
	\begin{align*}
	& \sum\limits_{\{g,h: r(g)=s(h)\}} c_{gh}S_g S_h=0 .\\
	\end{align*}
Multiplying both sides of the above equation by $S_e^*$ from the left, we get	
	\begin{align*}
& 	\sum\limits_{\{g,h: r(g)=s(h)\}} c_{gh} S_e^* S_g S_h=0\\
\Rightarrow & 	\sum\limits_{\{h: r(e)=s(h)\}} c_{eh} p_{r(e)} S_h=0 ~~[\text{by Proposition } \ref{properties}~(i)]\\
\Rightarrow & \sum\limits_{\{h: s(h)=r(e)\}} \sum\limits_{\{k: s(k)=r(e)\}} c_{eh} S_k S_k^* S_h=0\\
\Rightarrow & \sum\limits_{\{h: s(h)=r(e)\}}  c_{eh} S_h S_h^* S_h =0 ~~[\text{by Proposition } \ref{properties}~(i)]\\
\Rightarrow & \sum\limits_{\{h: s(h)=r(e)\}}  c_{eh} S_h =0.\\
\end{align*}
Again, multiplying both sides of the above equation by $S_f^*$ from the left, we have
\begin{align*}
 & \sum\limits_{\{h: s(h)=r(e)\}}  c_{eh} S_f^* S_h =0\\
\Rightarrow &~~   c_{ef} S_f^* S_f = c_{ef}~ p_{r(f)}=0\\
\Rightarrow &~~ c_{ef}=0.
	\end{align*} 
Hence, $\mathcal{E}^{(2)}$ is a linearly independent set.	
\end{proof}
\subsection{Compact quantum groups }
In this subsection, we will recall some important aspects of compact matrix quantum groups and their actions on a general $C^{*}$-algebra. We refer the readers to \cite{Van}, \cite{Wang}, \cite{Wor}, \cite{webcqg}, \cite{remark}, \cite{Tim}, \cite{Nesh} and \cite{Fres} for more details.
\begin{Def} \label{CMQG}
	Let $A$ be a unital $C^*$-algebra and $q=(q_{ij})_{n \times n}$ be a matrix whose entries are coming from $A$. $(A,q)$ is called a compact matrix quantum group (CMQG) if the following conditions are satisfied:
	\begin{enumerate}
		\item[(i)] A is generated by the entries of the matrix $q$.
		\item[(ii)] Both $q$ and ${q}^t:=(q_{ji})_{n \times n}$ are invertible in $M_n(A)$.
		\item[(iii)] There exists a $C^*$-homomorphism $\Delta: A \to A \otimes A $ such that $\Delta(q_{ij})= \sum_{k=1}^{n}q_{ik} \otimes q_{kj}$ for all $i,j \in \{1,2,...,n\}$. 
	\end{enumerate}
The matrix $q$ is called the fundamental representation of $A$. 
\end{Def}
\begin{Def} \label{identical}
	Two CMQGs $(A,q)$ and  $(A',q')$, where $A$ and $A'$ are unital $C^*$-algebras with fundamental representations $q=(q_{ij})_{n \times n}$ and $q'=(q_{ij}')_{n \times n}$ respectively, are said to be { \bf identical} (denoted by $(A,q) \approx (A',q')$) if there exists a $C^*$-isomorphism $ \phi: A \to A' $ such that $ \phi(q_{ij})=q_{ij}'$.
\end{Def}
\begin{Def}
	A CMQG $(B,q^{B})$ is said to be a quantum subgroup of $(A,q^{A})$ if there exists a surjective $C^*$-homomorphism $ \phi: A \to B $ such that $ \phi(q^{A}_{ij})=q^{B}_{ij}$.
\end{Def}
\begin{Def} \label{CQG}
	A compact quantum group (CQG) is a pair $(\mathcal{Q}, \Delta) $, where $\mathcal{Q}$ is a unital $C^{*}$-algebra and the coproduct $ \Delta : \mathcal{Q} \to \mathcal{Q} \otimes \mathcal{Q} $  is a unital $C^{*}$-homomorphism such that \vspace{0.1cm}
	\begin{itemize}
		\item[(i)] $(id_{\mathcal{Q}} \otimes \Delta)\Delta = (\Delta \otimes id_{\mathcal{Q}})\Delta $, \vspace{0.1cm}
		\item[(ii)] $ span\{ \Delta(\mathcal{Q})(1 \otimes \mathcal{Q} )\}$ and $ span\{\Delta(\mathcal{Q})(\mathcal{Q} \otimes 1)\} $ are dense in $(\mathcal{Q}\otimes \mathcal{Q})$. \vspace{0.2cm}
	\end{itemize}
Given two CQGs $(\mathcal{Q}, \Delta)$ and $(\mathcal{Q}', \Delta') $, a compact quantum group morphism (CQG morphism) from $(\mathcal{Q}, \Delta)$ to $(\mathcal{Q}', \Delta')$ is a $C^{*}$-homomorphism $ \phi: \mathcal{Q} \to \mathcal{Q}' $ such that $ (\phi \otimes \phi)\Delta=\Delta' \phi $ .\\
\end{Def}

\begin{rems}
 (1) Since $q^t=\bar{q}^*$, one can replace the condition (ii) of Definition \ref{CMQG} by ``both $q$ and $\bar{q}:=(q_{ij}^*)_{n \times n}$ are invertible in $M_n(A)$".\vspace{0.2cm}
 
 \noindent (2) In a CMQG $(A,q)$, let $A_0$ denote a *-subalgebra generated by $q_{ij}$'s. Then there exists a linear anti-multiplicative map $\kappa: A_0 \to A_0$, called the antipode, such that $\kappa(\kappa(a^*))^*=a$ for all $a \in A_0$ and the matrix $q_{\kappa}:=(\kappa(q_{ij}))_{n \times n}$ is the inverse of $q$.\vspace{0.2cm}
 
\noindent  (3) For any CQG $(\mathcal{Q}, \Delta)$, there also exists a canonical Hopf $*$-algebra $\mathcal{Q}_{0} \subseteq  \mathcal{Q} $ which is dense in $\mathcal{Q}$. Moreover, one can define an antipode $\kappa$ and a counit $\epsilon$ on the dense  Hopf $*$-algebra $\mathcal{Q}_{0}$. A CQG $(\mathcal{Q}, \Delta)$ is said to be {\bf kac type} if $\kappa$ is $*$-preserving.  \vspace{0.2cm}
 
 \noindent (4) One can show that every CMQG is  a CQG.\\
\end{rems}

\noindent We sometimes denote the CQG (CMQG) by $G$ and the underlying $C^*$-algebra associated to it by $C(G)$. In other words, $(C(G),\Delta)$ and $(C(G),q)$ represent the CQG and CMQG respectively. \\

\noindent Next, we will look at some examples of CQGs that appear as CMQGs. Interestingly, a few of them are unitary easy quantum groups (see \cite{speicher}, \cite{easy} for further details regarding easy quantum groups).  \\

\noindent Examples:
\begin{enumerate}
        \item Consider the universal $C^{*}$-algebra generated by $\{u_{ij}\}_{i,j=1,2,...,n}$ such that the following relations are satisfied: \vspace{0.1cm}
	\begin{itemize}
		\item  $u_{ij}^{2}=u_{ij}=u_{ij}^{*} $ for all $i,j \in \{1,2,...,n\}$, \vspace{0.1cm}
		\item  $\sum_{k=1}^{n} u_{ik}= \sum_{k=1}^{n} u_{kj}=1 $ for all $i,j \in \{1,2,...,n\}$, \vspace{0.1cm}
	\end{itemize}
    and denote it by $C(S_n^+)$.\\  
	Moreover, the coproduct on generators is given by $\Delta(u_{ij})= \sum_{k=1}^{n}u_{ik} \otimes u_{kj}$. Then $(C(S_n^+), \Delta)$ forms a CQG, namely the quantum permutation group on $n$ symbols, and it is denoted by $S_n^+$.\\
	Additionally, $C(S_n^+)$ is non-commutative if and only if $n \geq 4$, and 
	$$C(S_n)=C(S_n^+)/<u_{ij}u_{kl}-u_{kl}u_{ij}>$$
	(consult \cite{Wang}, \cite{BBC} for more details).\\

	\item For  $F \in \mathbb{GL}_{n}\mathbb{(C)}$, $ A_{u^{t}}(F)$ be the universal $C^{*}$-algebra generated by  $\{u_{ij}: i,j \in \{1,2,...,n\}\}$ subject to the following relations: \vspace{0.1cm}
	\begin{itemize}
		\item The matrix $u^{t}:=(u_{ji})_{n \times n}$ is unitary. \vspace{0.1cm}
		\item $uF^{-1}u^{*}F=F^{-1}u^{*}Fu=I_{n}$, where $u:=(u_{ij})_{n \times n}$. \vspace{0.1cm} 
	\end{itemize}
	The coproduct is given on the generators $\{u_{ij}\}_{i,j=1,2,...,n}$ by $\Delta(u_{ij})=\sum_{k=1}^{n} u_{ik}\otimes u_{kj} $. One can show that $(A_{u^{t}}(F),\Delta)$ (respectively, $(A_{u^{t}}(F),u)$) is a CQG (respectively, CMQG). In this article, we refer to this CQG (CMQG) again by $A_{u^{t}}(F)$ (consult \cite{Van}, \cite{universalCQG} for details).\\
	 In particular, if $F=\lambda I_n$ for $\lambda \neq 0$, then the associated CMQG $A_{u^{t}}(F)$ is denoted by $U_{n}^{+}$ (see \cite{Wangfree}). $U_n^+$ is called the free unitary group of order $n$. Moreover, $U_n^+$ can be viewed as the unitary easy quantum group with respect to the category of partition $\mathcal{C}_0 =  \emptyset $ (see Example 5.3 of \cite{easy} for more details). \\

	\item  $C(H_{n}^{\infty +})$ is defined to be the universal $C^*$-algebra generated by $\{u_{ij}: i,j \in \{1,2,...,n\}\}$ such that the following relations hold: \vspace{0.1cm}
	\begin{itemize}
		\item The matrices $u:=(u_{ij})_{n \times n}$ and $(u_{ij}^{*})_{n \times n}$ are unitaries. \vspace{0.1cm}
		\item $u_{ij}$'s are normal partial isometries for all $i,j$. \vspace{0.1cm}
	\end{itemize}
	The coproduct $\Delta $ is defined on the generators by $\Delta(u_{ij})=\sum_{k=1}^{n} u_{ik}\otimes u_{kj}$. Then $(C(H_{n}^{\infty +}), u)$ forms a CMQG, and it is denoted by $H_{n}^{\infty +}$ (see \cite{easy} for details). It can be shown that $H_{n}^{\infty +}$ is CQG-isomorphic to $C(S^1) \wr_{*} S_n^+$. We refer the readers to \cite{Wangfree} and \cite{Bichonwreath} for more details about free products and free wreath products. Moreover, the CMQG $H_{n}^{\infty +}$ is unitary easy with respect to the category of partition $\mathcal{C}_0 =\{ \begin{tikzpicture}[scale=0.5]
	\draw[fill=black] (0,0) circle (3pt);
	\draw[fill=black] (0,1) circle (3pt);
	\draw (0.6,0) circle (3pt);
	\draw (0.6,1) circle (3pt);
	\draw[black] (0,0.05)--(0,0.3);
	\draw[black] (0,0.95)--(0,0.7);
	\draw[black] (0.6,0.05)--(0.6,0.3);
	\draw[black] (0.6,0.95)--(0.6,0.7);
	\draw[black] (0,0.3)--(0.6,0.3);
	\draw[black] (0,0.7)--(0.6,0.7);
	\draw[black] (0.3,0.3) -- (0.3,0.7);\end{tikzpicture}, 
	\begin{tikzpicture}[scale=0.5]
	\draw (0,0) circle (3pt);
	\draw (0,1) circle (3pt);
	\draw[fill=black] (0.6,0) circle (3pt);
	\draw[fill=black] (0.6,1) circle (3pt);
	\draw[black] (0,0.05)--(0,0.3);
	\draw[black] (0,0.95)--(0,0.7);
	\draw[black] (0.6,0.05)--(0.6,0.3);
	\draw[black] (0.6,0.95)--(0.6,0.7);
	\draw[black] (0,0.3)--(0.6,0.3);
	\draw[black] (0,0.7)--(0.6,0.7);
	\draw[black] (0.3,0.3) -- (0.3,0.7);\end{tikzpicture}  , \begin{tikzpicture}[scale=0.5]
	\draw (0,0) circle (3pt);
	\draw (0,1) circle (3pt);
	\draw (0.6,0) circle (3pt);
	\draw (0.6,1) circle (3pt);
	\draw[black] (0,0.05)--(0,0.3);
	\draw[black] (0,0.95)--(0,0.7);
	\draw[black] (0.6,0.05)--(0.6,0.3);
	\draw[black] (0.6,0.95)--(0.6,0.7);
	\draw[black] (0,0.3)--(0.6,0.3);
	\draw[black] (0,0.7)--(0.6,0.7);
	\draw[black] (0.3,0.3) -- (0.3,0.7);\end{tikzpicture} \}$  (consult section 4 of \cite{easy} for further details).\\
	
	\item Let $C(SH_{n}^{\infty +})$ denote the universal $C^*$-algebra generated by $\{u_{ij}: i,j \in \{1,2,...,n\}\}$ subject to the following relations: \vspace{0.1cm}
	\begin{enumerate}
		\item The matrices $u:=(u_{ij})_{n \times n}$ and $(u_{ij}^{*})_{n \times n}$ are unitaries. \vspace{0.1cm}
		\item $u_{ij}$'s are partial isometries for all $i,j$. \vspace{0.1cm}
	\end{enumerate}
	Note that condition (b) can be replaced by the alternative condition $`` u_{ik}u_{jk}^*=u_{ik}^*u_{jk}=0 ~~ \forall i,j,k \in \{1,2,...,n\} \text{ with } i \neq j"$.
	The coproduct $\Delta $ on generators is again given by $\Delta(u_{ij})=\sum_{k=1}^{n} u_{ik}\otimes u_{kj}$. Then $(C(SH_{n}^{\infty +}),u)$ forms a unitary easy quantum group with respect to the category of partition $\mathcal{C}_0=\{\begin{tikzpicture}[scale=0.5]
	\draw[fill=black] (0,0) circle (3pt);
	\draw[fill=black] (0,1) circle (3pt);
	\draw (0.6,0) circle (3pt);
	\draw (0.6,1) circle (3pt);
	\draw[black] (0,0.05)--(0,0.3);
	\draw[black] (0,0.95)--(0,0.7);
	\draw[black] (0.6,0.05)--(0.6,0.3);
	\draw[black] (0.6,0.95)--(0.6,0.7);
	\draw[black] (0,0.3)--(0.6,0.3);
	\draw[black] (0,0.7)--(0.6,0.7);
	\draw[black] (0.3,0.3) -- (0.3,0.7);\end{tikzpicture}, 
	\begin{tikzpicture}[scale=0.5]
	\draw (0,0) circle (3pt);
	\draw (0,1) circle (3pt);
	\draw[fill=black] (0.6,0) circle (3pt);
	\draw[fill=black] (0.6,1) circle (3pt);
	\draw[black] (0,0.05)--(0,0.3);
	\draw[black] (0,0.95)--(0,0.7);
	\draw[black] (0.6,0.05)--(0.6,0.3);
	\draw[black] (0.6,0.95)--(0.6,0.7);
	\draw[black] (0,0.3)--(0.6,0.3);
	\draw[black] (0,0.7)--(0.6,0.7);
	\draw[black] (0.3,0.3) -- (0.3,0.7);\end{tikzpicture} \}$ and is simply denoted by $SH_{n}^{\infty +}$.\\	
	\item  The free product of $k (\geq 2)$ copies of $C(S^{1})$, denoted by $( \underbrace{C(S^{1})*\cdots *C(S^{1})}_{k \textendash times}, \Delta )$, is a CQG  whose underlying $C^*$-algebra is the universal $C^*$-algebra generated by $k$ unitary elements $ \{z_{i}\}_{i=1}^{k} $ (i.e., $C^*\{z_{1},z_{2},...,z_{k}~|~ z_{i}z_{i}^{*}=z_{i}^{*}z_{i}=1 ~\forall~ i \in \{1,2,...,k\} \}$), and the coproduct $\Delta$ on $\{z_{i}\}_{i=1}^{k}$  is given by $\Delta(z_{i})=z_{i} \otimes z_{i} $. The matrix $u$ is the fundamental representation with $u_{ij}=0$ for $i\neq j$ and $u_{ii}=z_i$. Moreover, it has the following universal property: \\
	If $(\mathcal{Q},\Delta)$ is a CQG generated by $k$ unitaries $\{x_{i}\}_{i=1}^{k}$ with $\Delta(x_{i})=x_{i} \otimes x_{i}$ for all $i \in \{1,2,...,k\}$, then there exists a surjective CQG morphism from $ (\underbrace{C(S^{1})* \cdots *C(S^{1})}_{k \textendash times}, \Delta)$ onto $(\mathcal{Q},\Delta)$, which sends $z_{i} \mapsto x_{i}$ for all $i \in \{1,2,...,k\}$ (see \cite{Wangfree}).

\end{enumerate}

\begin{rem}
    Recall that each element $g$ of $S_n$ can be identified with the permutation matrix $(g_{ij})_{n \times n}$ in $GL_n(\mathbb{C})$. 
	Let $(C(S_n), u)$ be the CMQG with the { \bf standard fundamental representation} $u=(u_{ij})_{n \times n}$, where $u_{ij}: S_n \to \mathbb{C}$ is defined by $u_{ij}(g)=g_{ij}$. Theorem 1.5 of \cite{Wor} asserts that $(C(S_n),u')$ is identical to $(C(S_n), u)$ for any fundamental representation $u'$ of order $n$. Therefore, if $(C(S_n),u')$ is a quantum subgroup of $(Q,q^{Q})$ with fundamental representation $q^{Q}=(q^{Q}_{ij})_{n \times n}$, then there exists a surjective $C^*$-homomorphism $\phi : Q \to C(S_n)$ such that $\phi(q^{Q}_{ij})=u_{ij}$. In this context, we simply say that \textbf{$S_n$ is a quantum subgroup of $(Q,q^{Q})$}.\\
\end{rem}

\subsection{Quantum automorphism groups}
In this section, we will discuss the CQG action and the quantum automorphism group of a $C^*$-algebra within a categorical framework. For more detailed information, readers can consult \cite{Wang} and \cite{Bichon}.
\begin{Def}
	A CQG $G$ acts faithfully on a unital $C^{*}$-algebra $\mathcal{C}$ if there exists a unital $C^{*}$-homomorphism $\alpha:\mathcal{C} \to \mathcal{C} \otimes C(G) $, called an action, which satisfies the following conditions: \vspace{0.1cm}
	\begin{itemize}
		\item[(i)]  Action equation: $(\alpha \otimes id_{C(G)})\alpha = (id_{\mathcal{C}} \otimes \Delta)\alpha.$ \vspace{0.1cm}
		\item[(ii)] Podle\'s condition: $span\{\alpha(\mathcal{C})(1\otimes C(G))\}$ is dense in $\mathcal{C} \otimes C(G).$ \vspace{0.1cm}
		\item[(iii)] Faithfulness: The $*$-algebra generated by the set $\{(\theta\otimes id)\alpha(\mathcal{C}) : \theta \in \mathcal{C}^{*}\}$ is norm-dense in $C(G)$.\vspace{0.1cm}
	\end{itemize}
\end{Def}
The pair $((C(G), \Delta),\alpha)$ is also referred to as the \textbf{quantum transformation group of $\mathcal{C} $}.\\

Given a unital $C^{*}$-algebra $ \mathcal{C} $, the \textbf{category of quantum transformation group of $\mathcal{C}$} is a \textbf{category} $ \mathfrak{C} $ which contains quantum transformation groups of $\mathcal{C}$ as objects, and a morphism from $((C(G_{1}),\Delta_{1}),\alpha_{1})$ to $((C(G_{2}),\Delta_{2}),\alpha_{2})$  is a CQG morphism $\phi :(C(G_{1}),\Delta_{1}) \to (C(G_{2}),\Delta_{2})$ such that $ (id_{\mathcal{C}} \otimes \phi)\alpha_{1}=\alpha_{2} $.\\
The \textbf{universal object of the category $ \mathfrak{C} $} is the quantum transformation group of $\mathcal{C}$, denoted by $((C(\widehat{G}),\widehat{\Delta}),\widehat{\alpha})$, which satisfies the following universal property:\\
For any object $((C(G), \Delta_{G}),\alpha_{G})$ in the category of quantum transformation group of $\mathcal{C}$, there exists a surjective CQG morphism $\widehat{\phi}: (C(\widehat{G}), \widehat{\Delta}) \to (C(G), \Delta_{G}) $ such that $(id_{\mathcal{C}} \otimes \widehat{\phi})\widehat{\alpha}=\alpha_{G} $. 

\begin{Def}
	Given a unital $C^{*}$-algebra $\mathcal{C}$, the {\bf quantum automorphism group} of $ \mathcal{C} $ is the underlying CQG of the universal object in the category of quantum transformation groups of $\mathcal{C}$, if the universal object exists.
\end{Def}

\begin{rem}
	An important observation is that the universal object might fail to exist in general when considering the above category. In 1998, Wang showed that although the universal object exists in the category of quantum transformation group of  $\mathbb{C}^n$ and the quantum automorphism group of $\mathbb{C}^n$ is $S_n^+$, the universal object in the category of quantum transformation group of $ M_{n}(\mathbb{C}) $  does not exist for $n \geq 2 $ (see \cite{Wang}). However, one can recover from this situation by restricting the category to a sub-category in the following way:\\
	Consider a linear functional $ \tau: \mathcal{C} \to \mathbb{C} $. Now, define a subcategory $\mathfrak{C}_{\tau}$ whose objects are quantum transformation groups $((\mathcal{Q},\Delta),\alpha)$ of $\mathcal{C}$ that satisfy the equation: $(\tau \otimes id)\alpha(.)=\tau(.).1 $ on a suitable subspace of $\mathcal{C}$, and morphisms are taken as described above.\\ 
	For example, in \cite{Wang}, the author assumed that each object in the category also preserves the state $Tr: M_{n}(\mathbb{C}) \to \mathbb{C}$ (the `Trace' map on $M_{n}(\mathbb{C})$), i.e., $(Tr \otimes id)\alpha(.)=Tr(.).1 $ on $M_{n}(\mathbb{C})$. Under this assumption, it was shown that the universal object exists in $\mathfrak{C}_{Tr}$ (see \cite{Wang} for more details). 
\end{rem}
\noindent {\bf Example:} (Quantum automorphism group of graphs) Recall that $S_n^+$ is a CMQG with a fundamental representation $u$. Let $\Gamma=\{V,E,s,r\}$ be a finite, directed graph without loops or multiple edges, and let $A_{\Gamma}$ denote its adjacency matrix. The quantum automorphism group of $\Gamma$ (in the sense of Banica), denoted by $QAut_{Ban}(\Gamma)$, is a CMQG whose underlying $C^*$-algebra is $C(S_{|V|}^+)/<uA_{\Gamma}-A_{\Gamma}u>$ (refer to \cite{Ban}, \cite{SSthesis} for details). \\
Let $K_n$ be the complete graph with $n$ vertices whose adjacency matrix $A_{K_n}$ is given by  $(A_{K_n})_{ij}= \begin{cases}
	0 & if ~ i=j\\
	1 & if ~ i\neq j\\
\end{cases}$, and let $K_n^c$ denote the complement graph of $K_n$, whose adjacency matrix is $0_{n \times n}$. If $\Gamma \cong {K_n} \text{ or } {K_n}^{c} $, then $QAut_{Ban}(\Gamma) \cong S_n^+$ (consult Example 2.1.8 of \cite{SSthesis}). 

\subsection{Quantum symmetry of a graph $C^{*}$-algebra}
Let $\Gamma=\{V(\Gamma),E(\Gamma),s,r\}$ be a finite graph without isolated vertices. Since $\Gamma$ has no isolated vertices, it suffices to define an action on the partial isometries corresponding to its edges. 
\begin{Def}(Definition 3.4 of \cite{Mandal})
	Given a graph $\Gamma$ without isolated vertices, a faithful action $\alpha$ of a CQG $G$ on a $C^{*}$-algebra $C^{*}(\Gamma)$ is said to be linear if $ \alpha(S_{e})=\sum\limits_{f \in E(\Gamma)} S_{f} \otimes q_{fe}$, where $q_{ef}\in C(G)$ for each $e,f \in E(\Gamma)$.\\
\end{Def}
 
Let us define
\begin{itemize}
	\item[(1)] $\mathcal{I}:=\{u\in V(\Gamma) : s^{-1}(u) = \emptyset\}$, \vspace{0.1cm}
	\item[(2)] $ E':= \{(e,f) \in E(\Gamma) \times E(\Gamma) : S_{e}S_{f}^{*} \neq 0 \}=\{(e,f) \in E(\Gamma) \times E(\Gamma) : r(e)=r(f) \}$. \vspace{0.1cm}
\end{itemize}

\noindent Lemma 3.2 of \cite{Mandal} shows that $\{ p_{u}, S_{e}S_{f}^{*} : u \in \mathcal{I}, (e,f) \in E' \}$ is a linearly independent set.\\[0.2cm]
Now, consider the vector subspace $\mathcal{V}_{2,+}= span\{ p_{u}, S_{e}S_{f}^{*} : u \in \mathcal{I}, (e,f) \in E' \}$ and a linear functional $\tau: \mathcal{V}_{2,+} \to \mathbb{C}$ defined by $\tau(S_{e}S_{f}^{*})=\delta_{ef}$, $\tau(p_{u})=1 $ for all $(e,f) \in E' $ and $ u \in \mathcal{I}$
(see subsection 3.1 of \cite{Mandal}).\\[0.1cm]
Since $\alpha(\mathcal{V}_{2,+}) \subseteq \mathcal{V}_{2,+} \otimes C(G)$ (by Lemma 3.6 of \cite{Mandal}),
 the equation $(\tau \otimes id)\alpha(.)=\tau(.).1 $ on $\mathcal{V}_{2,+}$ makes sense.\\

\begin{Def}(Definition 3.7 of \cite{Mandal})
	For a finite graph $\Gamma $ without isolated vertices, define a category $\mathfrak{C}_{\tau}^{Lin} $ whose objects are quantum transformation groups $((C(G),\Delta),\alpha)$ of the graph $C^*$-algebra $ C^{*}(\Gamma)$ such that $(\tau \otimes id)\alpha(x)=\tau(x).1 $ for all $x \in \mathcal{V}_{2,+}$. A morphism from $((C(G_1),\Delta_{1}),\alpha_{1})$ to $((C(G_2),\Delta_{2}),\alpha_{2})$ is given by a CQG morphism $\phi:C(G_{1}) \to C(G_{2}) $ such that $ (id_{C^{*}(\Gamma)} \otimes \phi)\alpha_{1}=\alpha_{2} $.\\
\end{Def}

\noindent Let $F^{\Gamma}$ be a $ (|E(\Gamma)| \times |E(\Gamma)| ) $ matrix defined by $(F^{\Gamma})_{ef}= \tau(S_{e}^{*}S_{f})$. In the following lemma, the matrix $F^{\Gamma}$ has been characterized. For the proof, we refer to Lemma 4.3 from \cite{Mandal}.   
\begin{lem} \label{Fgamma}
$F^{\Gamma}$ is an invertible diagonal matrix with $(F^{\Gamma})_{ee}= \begin{cases}
|s^{-1}(r(e))| & if ~ s^{-1}(r(e)) \neq \emptyset\\
1 & if ~ s^{-1}(r(e)) = \emptyset.
\end{cases}$
\end{lem}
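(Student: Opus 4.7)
The plan is to compute the matrix entry $(F^{\Gamma})_{ef} = \tau(S_e^* S_f)$ directly by splitting into the off-diagonal and diagonal cases, and within the diagonal case to further distinguish whether $r(e)$ is a sink or not.

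For the off-diagonal entries, that is when $e \neq f$, I would invoke Proposition \ref{properties}(i), which yields $S_e^* S_f = 0$. Hence $(F^{\Gamma})_{ef} = \tau(0) = 0$, and all off-diagonal entries vanish, so $F^{\Gamma}$ is diagonal.

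For the diagonal entries $(F^{\Gamma})_{ee} = \tau(S_e^* S_e)$, I would first apply relation (i) of the graph $C^*$-algebra definition to write $S_e^* S_e = p_{r(e)}$. Then I would split into two sub-cases depending on whether $r(e)$ lies in $\mathcal{I}$. If $s^{-1}(r(e)) = \emptyset$, then $r(e) \in \mathcal{I}$, so $p_{r(e)}$ lies in $\mathcal{V}_{2,+}$ directly and $\tau(p_{r(e)}) = 1$ by definition of $\tau$. If $s^{-1}(r(e)) \neq \emptyset$, I would expand via relation (ii) of the graph $C^*$-algebra definition,
\begin{equation*}
p_{r(e)} = \sum_{\{f:\, s(f) = r(e)\}} S_f S_f^{*},
\end{equation*}
and observe that each summand $S_f S_f^*$ lies in $\mathcal{V}_{2,+}$ since $(f,f) \in E'$. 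Applying the linear functional $\tau$ term by term gives
\begin{equation*}
\tau(S_e^*S_e) = \sum_{\{f:\, s(f) = r(e)\}} \tau(S_f S_f^{*}) = \sum_{\{f:\, s(f) = r(e)\}} \delta_{ff} = |s^{-1}(r(e))|.
\end{equation*}

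Finally, since every diagonal entry is a strictly positive integer, the matrix $F^{\Gamma}$ is invertible. There is no real obstacle here; the only subtlety is making sure the elements being evaluated by $\tau$ genuinely belong to its domain $\mathcal{V}_{2,+}$, which is why the case split on whether $r(e) \in \mathcal{I}$ matters. The rest is a direct unwinding of the defining relations of $C^{*}(\Gamma)$ and the definition of $\tau$.
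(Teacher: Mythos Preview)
Your proof is correct. The paper itself does not supply a proof of this lemma but simply refers to Lemma~4.3 of \cite{Mandal}; your direct computation via Proposition~\ref{properties}(i) and the defining relations of $C^*(\Gamma)$ is exactly the natural argument and presumably what the cited reference contains.
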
  
\noindent Therefore, $A_{u^{t}}(F^{\Gamma})$ is a CMQG. We refer the readers to Proposition 3.8 and Theorem 3.9 of \cite{Mandal} for the proof of the following theorem.
         
\begin{thm} \label{surjhom}
	For a finite graph $\Gamma$ without isolated vertices,\vspace{0.1cm}
	\begin{enumerate}
		\item there is a surjective CQG morphism from $A_{u^{t}}(F^{\Gamma})$ to any object in category $\mathfrak{C}_{\tau}^{Lin}$. \\
		\item the category $\mathfrak{C}_{\tau}^{Lin} $ admits a universal object.\\
	\end{enumerate}
	We denote the underlying CMQG and the corresponding action of the universal object by $ Q_{\tau}^{Lin}(\Gamma) $ and $\alpha^{univ}$ respectively, in category $\mathfrak{C}_{\tau}^{Lin} $. From the proof of Theorem 3.9 of \cite{Mandal}, it is evident that $ Q_{\tau}^{Lin}(\Gamma) $ is essentially a CMQG with the fundamental representation $q^{univ}$ such that $\alpha^{univ}(S_e)=\sum\limits_{f \in E(\Gamma)} S_f \otimes q^{univ}_{fe}$. We sometimes refer to the CMQG by $ (Q_{\tau}^{Lin}(\Gamma), q^{univ} )$.  \\
\end{thm}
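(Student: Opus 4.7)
For Part (1), I fix an object $((C(G),\Delta_G),\alpha)$ of $\mathfrak{C}_\tau^{Lin}$ and expand $\alpha(S_e) = \sum_f S_f \otimes q_{fe}$. The plan is to verify that the matrix $q=(q_{fe})$ satisfies the defining relations of $A_{u^t}(F^\Gamma)$, so that the universality of the latter yields a surjective CQG morphism sending $u_{fe}\mapsto q_{fe}$. First, applying the coaction identity $(\alpha \otimes id)\alpha = (id \otimes \Delta_G)\alpha$ to $S_e$, combined with the linear independence of $\{S_e : e \in E(\Gamma)\}$ in $C^*(\Gamma)$, yields $\Delta_G(q_{fe}) = \sum_k q_{fk} \otimes q_{ke}$. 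Thus $q$ is a finite-dimensional corepresentation of $(C(G),\Delta_G)$, its entries lie in the canonical Hopf $*$-subalgebra, and the antipode $\kappa$ produces a matrix inverse $q_\kappa = (\kappa(q_{ij}))$, so both $q$ and $q^t$ are invertible in $M_{|E(\Gamma)|}(C(G))$.

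Next, I compute $\alpha(S_e S_f^*) = \sum_{g,h} S_g S_h^* \otimes q_{ge} q_{hf}^*$, which lies in $\mathcal{V}_{2,+} \otimes C(G)$. Applying $\tau \otimes id$ and using $\tau(S_g S_h^*)=\delta_{gh}$ together with the $\tau$-preservation axiom gives $\sum_g q_{ge} q_{gf}^* = \delta_{ef}$, i.e., $q^t (q^t)^* = I$. Likewise, since $\alpha$ is a $*$-homomorphism and $S_e^* S_f = \delta_{ef} p_{r(e)}$, I obtain $\sum_g p_{r(g)} \otimes q_{ge}^* q_{gf} = \delta_{ef}\, \alpha(p_{r(e)})$; applying $\tau \otimes id$ and using $\tau(p_{r(g)}) = F^\Gamma_{gg}$ (via Lemma \ref{Fgamma}), this delivers $\sum_g F^\Gamma_{gg} q_{ge}^* q_{gf} = \delta_{ef} F^\Gamma_{ee}$, i.e., $q^* F^\Gamma q = F^\Gamma$. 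Combined with the invertibility of $q$ and $q^t$ established above, these two identities upgrade to $q^t$ unitary and $q(F^\Gamma)^{-1} q^* F^\Gamma = (F^\Gamma)^{-1}q^* F^\Gamma q = I$, which are exactly the defining relations of $A_{u^t}(F^\Gamma)$. The universal property of $A_{u^t}(F^\Gamma)$ therefore produces the required CQG morphism, and its surjectivity follows from the faithfulness axiom for $\alpha$.

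For Part (2), I construct $Q_\tau^{Lin}(\Gamma)$ as a quotient of $A_{u^t}(F^\Gamma)$. Let $u=(u_{fe})$ be the fundamental representation of $A_{u^t}(F^\Gamma)$ and formally set $\tilde{\alpha}(S_e) := \sum_f S_f \otimes u_{fe}$. Define $\mathcal{J}$ to be the smallest closed two-sided $*$-ideal of $A_{u^t}(F^\Gamma)$ that contains the coefficients (in the dense span $\{S_\gamma S_\mu^*\}$) of $\tilde{\alpha}(S_e^* S_e - p_{r(e)})$ and $\tilde{\alpha}(p_v - \sum_{s(f)=v} S_f S_f^*)$, together with the elements $(\tau \otimes id)\tilde{\alpha}(x) - \tau(x)\cdot 1$ for $x$ ranging over a basis of $\mathcal{V}_{2,+}$. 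By construction, $\tilde{\alpha}$ descends to a well-defined $\tau$-preserving $*$-homomorphism $\alpha^{univ}: C^*(\Gamma) \to C^*(\Gamma) \otimes (A_{u^t}(F^\Gamma)/\mathcal{J})$. The main obstacle is verifying that $\mathcal{J}$ is a Hopf $*$-ideal, i.e., $\Delta(\mathcal{J}) \subseteq \mathcal{J} \otimes A_{u^t}(F^\Gamma) + A_{u^t}(F^\Gamma) \otimes \mathcal{J}$; this follows because each generator of $\mathcal{J}$ is a polynomial in the $u_{ke}$'s of coaction type, and iterating $\Delta(u_{ke}) = \sum_j u_{kj} \otimes u_{je}$ decomposes it into the required form. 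Once this is established, $Q_\tau^{Lin}(\Gamma) := A_{u^t}(F^\Gamma)/\mathcal{J}$ inherits a CQG structure, $\alpha^{univ}$ is a faithful $\tau$-preserving action (the Podle\'s density following from the standard antipode argument on the quotient), and for any object $((C(G),\Delta_G),\alpha_G) \in \mathfrak{C}_\tau^{Lin}$ the Part (1) morphism $\pi_G: A_{u^t}(F^\Gamma)\to C(G)$ automatically annihilates $\mathcal{J}$ (since $\alpha_G$ is an honest $\tau$-preserving action) and therefore factors through $Q_\tau^{Lin}(\Gamma)$, establishing the universal property.
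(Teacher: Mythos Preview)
The paper itself does not prove this theorem; it simply cites Proposition~3.8 and Theorem~3.9 of \cite{Mandal}. Your argument is a correct reconstruction of the standard approach (and, as far as one can infer, matches the cited proof): Part~(1) derives the $A_{u^t}(F^\Gamma)$ relations from $\tau$-preservation and the coaction axioms, and Part~(2) realizes the universal object as the quotient of $A_{u^t}(F^\Gamma)$ by the minimal Hopf $*$-ideal needed to make $\tilde\alpha$ a genuine $*$-homomorphism.

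Two small remarks on Part~(2). First, the $\tau$-preservation generators you place in $\mathcal{J}$ are redundant: the computations of Part~(1) run backwards to show that the relations $u^t(u^t)^*=I$ and $u^*F^\Gamma u=F^\Gamma$, which hold in $A_{u^t}(F^\Gamma)$ by definition, already force $(\tau\otimes id)\tilde\alpha(x)=\tau(x)\cdot 1$ on all of $\mathcal{V}_{2,+}$. Second, your description of the generators of $\mathcal{J}$ is slightly circular, because $\tilde\alpha(p_v)$ is not determined by the assignment $\tilde\alpha(S_e)=\sum_f S_f\otimes u_{fe}$ alone; one must first \emph{choose} a value for $\tilde\alpha(p_v)$ via one of the Cuntz--Krieger relations (e.g.\ $\tilde\alpha(p_{r(e)}):=\tilde\alpha(S_e)^*\tilde\alpha(S_e)$ for a fixed $e\in r^{-1}(v)$, or $\tilde\alpha(p_v):=\sum_{s(f)=v}\tilde\alpha(S_f)\tilde\alpha(S_f)^*$ for non-sinks) and then impose the remaining relations as constraints on the $u_{fe}$'s. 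This is routine to arrange and is exactly how the construction in \cite{Mandal} proceeds, so the imprecision is cosmetic rather than substantive.
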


\section{Main Results}
\noindent In this section, we identify the compact matrix quantum groups that can appear as the quantum symmetries of graph $C^*$-algebras having maximal permutational symmetry. First, we briefly discuss the analogous scenario in the context of the quantum symmetry of a graph in the next proposition.

\begin{prop} \label{unitaryeasygraph}
For a directed graph $\Gamma$ containing $n$ vertices, without loops or multiple edges, the following are equivalent:
\begin{enumerate}
	\item $S_n$ is a quantum subgroup of $QAut_{Ban}(\Gamma)$.
	\item $\Gamma$ is isomorphic to either $K_n$ or $K_n^c$.
	\item $QAut_{Ban}(\Gamma) \cong S_n^+$.
\end{enumerate}
\end{prop}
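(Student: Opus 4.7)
The plan is to prove the cyclic chain of implications $(3) \Rightarrow (1) \Rightarrow (2) \Rightarrow (3)$. The implication $(3) \Rightarrow (1)$ is immediate from the well-known fact that $S_n$ is a quantum subgroup of $S_n^+$ (the quotient map $C(S_n^+) \twoheadrightarrow C(S_n)$ identifies the standard fundamental representation). The implication $(2) \Rightarrow (3)$ is already recorded in the paper (Example 2.1.8 of \cite{SSthesis}), so no work is needed there. Hence the real content is $(1) \Rightarrow (2)$.

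For $(1) \Rightarrow (2)$, I would start from the surjective CQG morphism $\phi : C(QAut_{Ban}(\Gamma)) \to C(S_n)$ guaranteed by the hypothesis, sending the fundamental representation $u$ of $QAut_{Ban}(\Gamma)$ to the standard fundamental representation $u'$ of $S_n$. Since the defining relation $uA_\Gamma = A_\Gamma u$ holds in $M_n(C(QAut_{Ban}(\Gamma)))$, applying $(\mathrm{id} \otimes \phi)$ entrywise transports it to $u' A_\Gamma = A_\Gamma u'$ in $M_n(C(S_n))$. Evaluating both sides at each $g \in S_n$, and using that $u'(g) = P_g$ (the associated permutation matrix), I obtain $P_g A_\Gamma = A_\Gamma P_g$ for every $g \in S_n$.

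The next step is a short classical matrix-theoretic lemma: a matrix $M \in M_n(\mathbb{C})$ commutes with every permutation matrix if and only if $M = aI_n + bJ_n$ for some $a,b \in \mathbb{C}$, where $J_n$ is the all-ones matrix. This follows from comparing entries under the action $P_\sigma M P_\sigma^{-1} = M$, which forces $M_{\sigma(i)\sigma(j)} = M_{ij}$ for all $\sigma$, collapsing $M$ to two values (diagonal vs.\ off-diagonal). Applying this to $A_\Gamma$ and using the standing assumptions that $\Gamma$ has no loops (so the diagonal of $A_\Gamma$ vanishes, forcing $a + b = 0$... actually $a = 0$) and no multiple edges (so entries are $0$ or $1$, forcing $b \in \{0,1\}$), I conclude $A_\Gamma = 0$ or $A_\Gamma = J_n - I_n$, i.e., $\Gamma \cong K_n^c$ or $\Gamma \cong K_n$.

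I do not anticipate a serious obstacle here: the argument is essentially the observation that any classical quantum subgroup of $QAut_{Ban}(\Gamma)$ sits inside the classical automorphism group $\mathrm{Aut}(\Gamma)$, and $S_n \subseteq \mathrm{Aut}(\Gamma)$ trivially forces the graph to be either empty or complete. The only small care needed is the bookkeeping that the standard fundamental representation of $S_n$ in the sense of the paper's remark on identical CMQGs is indeed the evaluation-at-$g$ representation, so that the relation $u'A_\Gamma = A_\Gamma u'$ specializes pointwise to $P_g A_\Gamma = A_\Gamma P_g$; this is exactly what the preceding remark about ``$S_n$ being a quantum subgroup'' is designed to deliver.
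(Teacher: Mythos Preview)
Your proof is correct. Both your argument and the paper's establish the same cyclic chain, with the only substantive step being $(1)\Rightarrow(2)$, and both ultimately rest on the same observation: if $S_n$ is a quantum subgroup of $QAut_{Ban}(\Gamma)$, then every permutation of the vertex set is a classical automorphism of $\Gamma$. The execution differs, however. The paper proceeds abstractly: it passes to the commutator quotient $QAut_{Ban}(\Gamma)/J$, invokes Lemma~2.1.6 of \cite{SSthesis} to identify this with $C(\mathrm{Aut}(\Gamma))$, and then dualizes the surjection onto $C(S_n)$ to an injection $S_n\hookrightarrow\mathrm{Aut}(\Gamma)$, concluding by the classical graph-theoretic fact that only $K_n$ and $K_n^c$ have full automorphism group $S_n$. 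Your route is more direct and self-contained: you transport the defining relation $uA_\Gamma=A_\Gamma u$ through the morphism, evaluate pointwise to obtain $P_gA_\Gamma=A_\Gamma P_g$ for all $g\in S_n$, and then prove the matrix lemma that forces $A_\Gamma\in\{0,\,J_n-I_n\}$ on the spot. Your approach avoids the external references (Theorem~2.11 of \cite{Wangfree}, Lemma~2.1.6 of \cite{SSthesis}) and the unproved classical statement about $\mathrm{Aut}(\Gamma)=S_n$, at the modest cost of writing out the commutant-of-permutation-matrices computation; the paper's version is terser but leans more heavily on the literature.
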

\begin{proof}
(1) $\implies$ (2): Since $S_n$ is a quantum subgroup of $(QAut_{Ban}(\Gamma),q)$, there exists a surjective CQG morphism  $\theta: (QAut_{Ban}(\Gamma),q) \to (C(S_n),u)$,  where $q$ and $u$ are fundamental representations of $QAut_{Ban}(\Gamma)$ and $C(S_n)$ respectively, both of order $n$. Consider $J:=<q_{ij}q_{kl}-q_{kl}q_{ij}>$, the commutator ideal of $QAut_{Ban}(\Gamma)$. It is clear that $J \subseteq ker\theta$. Hence, there exists a unique surjective CQG morphism $\tilde{\theta}: QAut_{Ban}(\Gamma)/J \to C(S_n) $ such that $\tilde{\theta}\pi=\theta$  (by Theorem 2.11 of \cite{Wangfree}). Since $QAut_{Ban}(\Gamma)/J \cong C(Aut(\Gamma))$ (see Lemma 2.1.6 of \cite{SSthesis}), there exists a surjective CQG morphism $\bar{\theta}: C(Aut(\Gamma)) \to C(S_n)$. Therefore, we have an injective group morphism from $S_n$ to $Aut(\Gamma)$. Moreover, $Aut(\Gamma)$ is a subgroup of $S_n$. Hence, $Aut(\Gamma) \cong S_n$. Using a straightforward graph-theoretic argument, one can deduce that there are only two graphs (with $n$ vertices): $K_n$  and $K_n^c$, both of which have the full automorphism group $S_n$. Therefore, $\Gamma$ is isomorphic to either $K_n$ or $K_n^c$. \\
(2) $\implies$ (3): $QAut_{Ban}(K_n) \cong QAut_{Ban}(K_n^c)  \cong S_n^+$ (Example 2.1.8 of \cite{SSthesis}).\\
(3) $\implies$ (1): The result is evident, as $S_n$ is a quantum subgroup of $S_n^+$.\\
\end{proof}

\begin{rem}
	Under the hypothesis of Proposition \ref{unitaryeasygraph}, the quantum automorphism group of a graph $C^*$-algebra, as defined by Schmidt and Weber, denoted by $QAut_{SW}(\Gamma)$, coincides with $QAut_{Ban}(\Gamma)$ (refer to \cite{Web}). Hence, the above equivalent statements hold in the graph $C^*$-algebraic context if we replace $QAut_{Ban}(\Gamma)$ by $QAut_{SW}(\Gamma)$.\\
\end{rem}

Now, recall that the CMQG $(Q_{\tau}^{Lin}(\Gamma),q^{univ})$ is the underlying CQG of the universal object in the category $\mathfrak{C}_{\tau}^{Lin} $. 
From now on, for our convenience, {\bf we denote the fundamental representation $q^{univ}$ of $Q_{\tau}^{Lin}(\Gamma) $ simply by $q$}.

\begin{prop} \label{S}
	Let $\mathcal{S}:=\{ \Gamma : \Gamma \text{ contains no loops, and } r(e)=v ~~\forall e \in E(\Gamma) \text{ for some } v \in V(\Gamma)\}$. If $\Gamma \in \mathcal{S}$, then $(Q_{\tau}^{Lin}(\Gamma), q) \approx U_n^+$ where $n=|E(\Gamma)|$.
\end{prop}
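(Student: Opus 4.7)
The plan is to exhibit an action of $U_n^+$ on $C^*(\Gamma)$ lying in $\mathfrak{C}_\tau^{Lin}$ and then invoke universality. For $\Gamma\in\mathcal{S}$ the common range vertex $v$ is a sink: $r(e)=v$ for every edge $e$ and the absence of loops forces $s^{-1}(v)=\emptyset$. Lemma \ref{Fgamma} then gives $(F^\Gamma)_{ee}=1$ for every $e$, i.e.\ $F^\Gamma=I_n$, whence $A_{u^t}(F^\Gamma)=U_n^+$. Theorem \ref{surjhom}(1) already supplies a surjective CQG morphism $\pi\colon U_n^+\twoheadrightarrow Q_\tau^{Lin}(\Gamma)$ with $\pi(u_{fe})=q_{fe}$; once I produce a morphism $\psi$ in the opposite direction with $\psi(q_{fe})=u_{fe}$, the compositions $\pi\psi$ and $\psi\pi$ are identities on generators, establishing $(Q_\tau^{Lin}(\Gamma),q)\approx U_n^+$.

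Define $\alpha\colon C^*(\Gamma)\to C^*(\Gamma)\otimes C(U_n^+)$ on generators by $\alpha(S_e):=\sum_f S_f\otimes u_{fe}$, $\alpha(p_v):=p_v\otimes 1$, and $\alpha(p_u):=\sum_{s(f)=u}\alpha(S_f)\alpha(S_f)^*$ for $u\neq v$ (each such $u$ has $s^{-1}(u)\neq\emptyset$ since $\Gamma$ has no isolated vertices). The main technical step is to verify that the images satisfy the defining relations of $C^*(\Gamma)$. Using $u^*u=I_n$ (which holds in $U_n^+$ because $F=I_n$), together with $S_f^*S_{f'}=\delta_{ff'}p_v$ and $S_f p_v=S_f$, one obtains $\alpha(S_f)^*\alpha(S_{f'})=\delta_{ff'}(p_v\otimes 1)$. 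This yields relation (i) of the definition of $C^*(\Gamma)$, namely $\alpha(S_e)^*\alpha(S_e)=\alpha(p_{r(e)})$, and shows that the $\alpha(p_u)$ for $u\neq v$ are mutually orthogonal idempotents, also orthogonal to $\alpha(p_v)$. The relation $uu^*=I_n$ then delivers $\sum_f\alpha(S_f)\alpha(S_f)^*=\sum_g S_g S_g^*\otimes 1=\sum_{u\neq v}p_u\otimes 1$, so $\sum_w \alpha(p_w)=1$; the Cuntz--Krieger relation at $u\neq v$ is built into the definition of $\alpha(p_u)$.

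The coaction identity $(\alpha\otimes id)\alpha=(id\otimes\Delta)\alpha$ is immediate from $\Delta(u_{fe})=\sum_k u_{fk}\otimes u_{ke}$; the Podle\'s condition follows from $\sum_e\alpha(S_e)(1\otimes u_{fe}^*)=S_f\otimes 1$ (using $uu^*=I_n$); faithfulness holds since each $u_{fe}$ is recovered as a slice $(\theta\otimes id)\alpha(S_e)$. For $\tau$-invariance on $\mathcal{V}_{2,+}=\mathrm{span}\{p_v,\,S_eS_f^*:e,f\in E(\Gamma)\}$ (note $\mathcal{I}=\{v\}$ here), the only non-trivial check is $(\tau\otimes id)\alpha(S_eS_f^*)=\sum_g u_{ge}u_{gf}^*=(u^t(u^t)^*)_{ef}=\delta_{ef}=\tau(S_eS_f^*)$. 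Hence $((C(U_n^+),\Delta),\alpha)$ is an object of $\mathfrak{C}_\tau^{Lin}$, and Theorem \ref{surjhom}(2) produces the required $\psi$, completing the proof.

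The principal obstacle is the second paragraph: confirming that the prescribed $\alpha(p_u)$ for $u\neq v$ really do yield mutually orthogonal projections whose sum with $\alpha(p_v)$ equals the identity. All four unitarity identities of the fundamental representation enter the argument, and crucially one needs both $u^*u=I_n$ and $uu^*=I_n$ — identities that would fail in a generic $A_{u^t}(F)$ with $F\neq\lambda I_n$. The flat weight $F^\Gamma=I_n$ forced by the geometry of $\mathcal{S}$ is precisely what makes the construction go through; once the $*$-homomorphism is in hand, the remaining axioms are essentially formal consequences of standard manipulations with $u_{fe}$.
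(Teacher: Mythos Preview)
Your proof is correct and follows essentially the same approach as the paper: identify $F^\Gamma=I_n$ to get the surjection $U_n^+\twoheadrightarrow Q_\tau^{Lin}(\Gamma)$ from Theorem~\ref{surjhom}, then construct the linear action $\alpha(S_e)=\sum_f S_f\otimes u_{fe}$ of $U_n^+$ on $C^*(\Gamma)$, verify the Cuntz--Krieger relations via the unitarity of $u$ and $u^t$, check Podle\'s and $\tau$-invariance, and invoke universality to obtain the inverse morphism. Your write-up is in fact slightly more careful than the paper's in justifying that $v$ is a sink, that every $u\neq v$ is a non-sink, and that the $\alpha(p_u)$ are mutually orthogonal projections summing to $1$.
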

\begin{proof}
	Let $\Gamma \in \mathcal{S}$ with $|E(\Gamma)|=n$ and $r(e)=v$ for all $e \in E(\Gamma)$. Since $F^{\Gamma}=I_{n}$, by Theorem \ref{surjhom}, there exists a surjective CQG morphism $\phi: C(U_n^+) \to Q_{\tau}^{Lin}(\Gamma) $ such that $\phi(u_{ij})=q_{ij}$.\\
	On the other hand, we need to show that there exists an action $\alpha$ such that $((C(U_n^+),\Delta), \alpha)$ is an object of $\mathfrak{C}_{\tau}^{Lin}$. To that end, define a linear action $\alpha: C^*(\Gamma) \to C^*(\Gamma) \otimes C(U_n^+) $ by $\alpha(S_e)= \sum\limits_{f \in E(\Gamma)} S_f \otimes u_{fe}$, where $(u_{ij})_{n \times n}$ is the fundamental representation of $U_n^+$. To show that $\alpha$ is a well-defined $C^*$- homomorphism, we will demonstrate that $\{\alpha(S_e)\}_{e \in E(\Gamma)}$ satisfies the defining relations of $C^*(\Gamma)$. Firstly, we have
	\begin{align*}
	\alpha(S_e)^* \alpha(S_e)=& \sum\limits_{g \in E(\Gamma)} S_g^* S_g \otimes u_{ge}^* u_{ge}\\
	&=\sum\limits_{g \in E(\Gamma)} p_v \otimes u_{ge}^* u_{ge}= p_v \otimes \sum\limits_{g \in E(\Gamma)} u_{ge}^* u_{ge}\\
	&= p_v \otimes 1 
	\end{align*}
	for all $e \in E(\Gamma)$. Thus, we can extend $\alpha$ on $\{p_u: u \in V(\Gamma)\}$ by defining $\alpha(p_v)=p_v \otimes 1$ and	$\alpha(p_u)= \alpha \left( \sum\limits_{s(f)=u} S_f S_f ^*\right) = \sum\limits_{s(f)=u} ~~ \sum\limits_{k,l \in E(\Gamma)} S_k S_l^* \otimes u_{kf}u_{lf}^*$ for any $u \in V(\Gamma)-\{v\}$.\\ Moreover, for $e \neq f$, we verify $$\alpha(S_e)^* \alpha(S_f)= \sum\limits_{g \in E(\Gamma)} S_g^* S_g \otimes u_{ge}^* u_{gf}=p_{v} \otimes \sum\limits_{g \in E(\Gamma)} u_{ge}^* u_{gf}=0 .$$
	These calculations prove the existence of $\alpha.$ Clearly, $\alpha$ satisfies the action equation and faithfulness condition. For Podle\'s condition, note that 
	\begin{align*}
	\sum_{f \in E(\Gamma)}\alpha(S_f)(1 \otimes u_{ef}^{*}) &= \sum_{g,f \in E(\Gamma)}(S_g \otimes u_{gf})(1 \otimes u_{ef}^{*})\\
	&= \sum_{g \in E(\Gamma)} \left( S_g \otimes \sum_{f \in E(\Gamma)}u_{gf} u_{ef}^{*} \right)= \sum_{g \in E(\Gamma)} S_g \otimes \delta_{ge}= S_e \otimes 1 
	\end{align*}
	and 
	\begin{align*}
	\alpha(1)(1 \otimes u_{rs})= (1 \otimes u_{rs}).
	\end{align*}
	Since $(S_e \otimes u_{rs})=(S_e \otimes 1)(1 \otimes u_{rs})$, the Podle\'s condition follows.
	 For the $\tau$-preserving condition, observe that $S_e S_f^* \neq 0$ for all $e,f \in E(\Gamma)$ and 
	\begin{align*}
		(\tau \otimes id)\alpha(S_e S_f^*)=& \sum\limits_{g,h \in E(\Gamma)} \tau(S_g S_h^*) u_{ge} u_{hf}^*=\sum\limits_{g,h \in E(\Gamma)} \delta_{gh} u_{ge} u_{hf}^*\\
		=& \sum\limits_{g \in E(\Gamma)}  u_{ge} u_{gf}^*=\delta_{ef}=\tau(S_e S_f^*).
	\end{align*}
	Moreover, for the unique vertex $v$ with $s^{-1}(v)=\emptyset$, we have
	\begin{equation}
	(\tau \otimes id)\alpha(p_v)=\tau(p_v).
	\end{equation}
	Therefore, $U_n^+$ along with the action $\alpha$ is an object of $\mathfrak{C}_{\tau}^{Lin}$. Hence, by universal property, there exists a surjective CQG morphism $\psi:  Q_{\tau}^{Lin}(\Gamma) \to C(U_n^+) $ such that $\psi(q_{ij})=u_{ij}$. 
\end{proof}
 \begin{Observation}
	Note that every $\Gamma \in \mathcal{S}$ is connected. \label{EN1}
\end{Observation}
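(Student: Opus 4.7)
The plan is to show that the distinguished vertex $v$ (the common range of all edges) acts as a universal hub: every other vertex is adjacent to $v$, which immediately yields connectedness.

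First I would recall the standing assumption on the graphs appearing in the category $\mathfrak{C}_{\tau}^{Lin}$: $\Gamma$ has no isolated vertices, so for every $u \in V(\Gamma)$ at least one of $s^{-1}(u)$ or $r^{-1}(u)$ is non-empty. Fix now an arbitrary vertex $u \in V(\Gamma) \setminus \{v\}$. Since $r(e) = v$ for all $e \in E(\Gamma)$, we have $r^{-1}(u) = \emptyset$, and hence by the no-isolated-vertices condition there must exist an edge $e \in E(\Gamma)$ with $s(e) = u$. Because $r(e) = v$, this edge is of the form $u \to v$. Note that the no-loops hypothesis in the definition of $\mathcal{S}$ is compatible here since $u \neq v$; it also ensures the construction does not collapse when $u = v$.

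Consequently $u \sim v$ for every $u \in V(\Gamma) \setminus \{v\}$. Given two distinct vertices $u, w \in V(\Gamma)$, either one of them equals $v$, in which case they are directly adjacent by the previous paragraph, or else both are distinct from $v$ and we have the adjacency chain $u \sim v \sim w$. This is exactly the graph-theoretic definition of connectedness recalled in the Preliminaries, completing the argument.

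The step I would expect to handle with the most care is simply making the case distinction $u = v$ versus $u \neq v$ transparent, and explicitly invoking the standing convention that graphs in the category have no isolated vertices; there is no real obstacle since the structure of $\mathcal{S}$ forces $v$ to play the role of a universal range.
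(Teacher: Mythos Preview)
Your argument is correct and is exactly the natural one: the paper states this observation without proof, and your reasoning---using the no-isolated-vertices convention to force every $u\neq v$ to emit an edge, which then must terminate at $v$---is the intended justification.
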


\begin{prop} \label{I1}
Let $\mathcal{I}_1:=\{ \Gamma : \Gamma \text{ contains no loops, and for each } e \in E(\Gamma), ~r(e) \text{ is a sink of indegree 1} \}$.\\
If $\Gamma \in \mathcal{I}_1$, then $(Q_{\tau}^{Lin}(\Gamma), q) \approx SH_{n}^{\infty+}$, where $n=|E(\Gamma)|$.
\end{prop}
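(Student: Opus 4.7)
The plan is to exhibit surjective CQG morphisms in both directions between $Q_{\tau}^{Lin}(\Gamma)$ and $SH_{n}^{\infty+}$, each sending entries of the fundamental representation to one another, so that the two composites reduce to the identity on generators and thereby identify the CMQGs. This mirrors the strategy of Proposition \ref{S}, with the extra ingredient that every $q_{fe}$ must additionally be shown to be a partial isometry.

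First, since every $r(e)$ is a sink by hypothesis, Lemma \ref{Fgamma} gives $F^{\Gamma}=I_n$, so $A_{u^{t}}(F^{\Gamma})=U_n^{+}$. Theorem \ref{surjhom} then supplies a surjective CQG morphism $U_n^{+}\to Q_{\tau}^{Lin}(\Gamma)$, $u_{ij}\mapsto q_{ij}$, which in particular makes $q$ and $\bar{q}$ unitary in $Q_{\tau}^{Lin}(\Gamma)$. To upgrade this to a morphism from $SH_{n}^{\infty+}$, I would show each $q_{fe}$ is a partial isometry. Applying $\alpha^{univ}$ to $p_{r(e)}=S_e^{*}S_e$ yields
\[
\alpha^{univ}(p_{r(e)})=\sum_{f\in E(\Gamma)}p_{r(f)}\otimes q_{fe}^{*}q_{fe}.
\]
The indegree-$1$ assumption makes $\{p_{r(f)}\}_f$ a family of mutually orthogonal projections, linearly independent by Lemma 3.2 of \cite{Mandal}. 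Squaring the above and collapsing via $p_{r(f)}p_{r(g)}=\delta_{fg}p_{r(f)}$ produces $\sum_{f}p_{r(f)}\otimes(q_{fe}^{*}q_{fe})^{2}$, which must equal $\alpha^{univ}(p_{r(e)})$; linear independence then forces $(q_{fe}^{*}q_{fe})^{2}=q_{fe}^{*}q_{fe}$, so each $q_{fe}$ is a partial isometry. The universal property of $SH_{n}^{\infty+}$ now supplies a surjective CQG morphism $SH_{n}^{\infty+}\to Q_{\tau}^{Lin}(\Gamma)$ sending $u_{ij}\mapsto q_{ij}$.

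For the reverse direction I would build an action $\alpha\colon C^{*}(\Gamma)\to C^{*}(\Gamma)\otimes C(SH_{n}^{\infty+})$ by setting $\alpha(S_e):=\sum_{f}S_f\otimes u_{fe}$, together with $\alpha(p_{r(e)}):=\sum_{f}p_{r(f)}\otimes u_{fe}^{*}u_{fe}$ for $r(e)\in\mathcal{I}$ and $\alpha(p_v):=\sum_{s(f)=v}\alpha(S_f)\alpha(S_f)^{*}$ for non-sinks $v$. To invoke universality of $C^{*}(\Gamma)$, the relation $\alpha(S_e)^{*}\alpha(S_e)=\alpha(p_{r(e)})$ is immediate from $S_f^{*}S_g=\delta_{fg}p_{r(f)}$, but the main calculation is to verify that $\{\alpha(p_v)\}_v$ are mutually orthogonal projections summing to $1$. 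This splits into three cases. For two distinct sinks $r(e),r(e')$, one needs row-level orthogonality $(u_{fe}^{*}u_{fe})(u_{fe'}^{*}u_{fe'})=0$, which follows from $\sum_{e}u_{fe}^{*}u_{fe}=1$ (a consequence of $\bar{u}^{*}\bar{u}=I$) combined with each $u_{fe}^{*}u_{fe}$ being a projection. For a sink paired with a non-sink, the cross-term vanishes because $p_{r(f)}S_g=0$ whenever $r(f)$ is a sink. For two distinct non-sinks, one uses the mutual orthogonality of $\{S_g S_g^{*}\}_g$ in $C^{*}(\Gamma)$ together with $\sum_{f}u_{gf}u_{gf}^{*}=1$ (from $uu^{*}=I$) and the partial isometry condition. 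The identity $\sum_{v}\alpha(p_v)=1$ then separates as $\sum_{v\in\mathcal{I}}+\sum_{v\notin\mathcal{I}}$ and collapses via the same row-sum relations together with $\sum_{f}p_{r(f)}+\sum_{g}S_g S_g^{*}=\sum_{v}p_v=1$. The coaction identity, Podle\'s condition, faithfulness, and $\tau$-preservation on $\mathcal{V}_{2,+}=\mathrm{span}\{p_{r(e)},S_e S_e^{*}:e\in E(\Gamma)\}$ (using $\tau(p_{r(e)})=\tau(S_e S_e^{*})=1$) then follow by routine calculations closely mirroring Proposition \ref{S}.

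The universal property of $Q_{\tau}^{Lin}(\Gamma)$ now yields a surjective CQG morphism $Q_{\tau}^{Lin}(\Gamma)\to SH_{n}^{\infty+}$, $q_{ij}\mapsto u_{ij}$. Since the two morphisms send generating matrix entries to each other, their composites fix the generators on either side, hence both are isomorphisms and $(Q_{\tau}^{Lin}(\Gamma),q)\approx SH_{n}^{\infty+}$. The main technical obstacle will be the three-case orthogonality verification for $\{\alpha(p_v)\}_v$, where the $SH_{n}^{\infty+}$ relations must be extracted in their ``row'' form (orthogonality of $\{u_{fe}^{*}u_{fe}\}_e$ and $\{u_{gf}u_{gf}^{*}\}_f$ for fixed $f,g$) rather than the ``column'' form in which the alternative defining relation is stated.
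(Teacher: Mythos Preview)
Your proposal is correct, and the construction of the $SH_n^{\infty+}$-action on $C^{*}(\Gamma)$ is essentially the paper's argument, only spelled out in more detail (the paper simply defines $\alpha(S_e)=\sum_f S_f\otimes u_{fe}$, checks $\alpha(S_e)^{*}\alpha(S_f)=0$ for $e\neq f$, and asserts the remaining verifications are routine).

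The first half, however, takes a genuinely different route. The paper applies $\alpha^{univ}$ to the two relations $S_e^{*}S_f=0$ and $S_eS_f^{*}=0$ (the latter holding because $r$ is injective) to extract $q_{ge}^{*}q_{gf}=0$ and $q_{ge}q_{gf}^{*}=0$ for $e\neq f$, and then invokes the antipode $\kappa(q_{ij})=q_{ji}^{*}$ to obtain the transposed versions $q_{eg}^{*}q_{fg}=q_{eg}q_{fg}^{*}=0$, matching the \emph{alternative} form of condition~(b) in the definition of $SH_n^{\infty+}$. You instead square the projection identity $\alpha^{univ}(p_{r(e)})=\sum_f p_{r(f)}\otimes q_{fe}^{*}q_{fe}$ and use mutual orthogonality and linear independence of the sink projections $\{p_{r(f)}\}_f$ to read off directly that each $q_{fe}^{*}q_{fe}$ is idempotent, i.e.\ that each $q_{fe}$ is a partial isometry---matching condition~(b) in its original form. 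Your argument is shorter and bypasses the antipode entirely; the paper's approach has the minor side benefit of making the column-orthogonality relations explicit, which can be handy in other computations.
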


\begin{proof}
Let $\Gamma \in \mathcal{I}_1$ with $|E(\Gamma)|=n$. Since $S_e^* S_f=0$ for $e \neq f$, applying the action $\alpha^{univ}: C^*(\Gamma) \to C^*(\Gamma) \otimes Q_{\tau}^{Lin}(\Gamma) $, we get
\begin{align*}
\sum_{g \in E{(\Gamma)}} p_{r(g)} \otimes q_{ge}^* q_{gf}=0.
\end{align*}
Since $r(g)$ is a sink of indegree 1, it follows that $q_{ge}^* q_{gf}=0$ for all $g \in E(\Gamma)$ and $e \neq f$. Additionally, using the formula $\kappa(q_{ij})=q_{ji}^{*} ~~ \forall~~ i,j \in E(\Gamma)$ corresponding to the antipode $\kappa$, one can deduce that $q_{eg}^{*} q_{fg}=0$ for all $g \in E(\Gamma)$ and $e \neq f$.\\
Moreover, from the relation $S_e S_f^* =0$ whenever $e \neq f$ (Proposition \ref{properties} (iv)) and using the fact that $r$ is injective, we have
\begin{align*}
\sum_{g \in E{(\Gamma)}} S_g S_g^*\otimes q_{ge} q_{gf}^*=0.
\end{align*}
Therefore, $q_{ge} q_{gf}^*=0$ for all $g \in E(\Gamma)$, since the set $\{S_e S_f^{*} : (e,f) \in E(\Gamma) \times E(\Gamma), r(e)=r(f) \}$ is linearly independent. Furthermore, whenever $e \neq f$, $q_{eg} q_{fg}^{*} =0$ for all $g \in E(\Gamma)$. Hence, one can conclude that $(Q_{\tau}^{Lin}(\Gamma),q)$ is a quantum subgroup of $ SH_{n}^{\infty +}$. \\
Next, to show that $SH_{n}^{\infty +}$ is a quantum subgroup of  $(Q_{\tau}^{Lin}(\Gamma),q)$, we need to demonstrate that there exists an action $\alpha$ such that $((C(SH_{n}^{\infty +}), \Delta), \alpha)$ is an object of $\mathfrak{C}_{\tau}^{Lin}$ for any $\Gamma \in \mathcal{I}_1$ with $|E(\Gamma)|=n$.
We define a linear action $\alpha: C^*(\Gamma) \to C^*(\Gamma) \otimes C(SH_{n}^{\infty+}) $ by setting $\alpha(S_e)= \sum\limits_{f \in E(\Gamma)} S_f \otimes u_{fe}$. To prove that $\alpha$ is a well-defined $C^*$-homomorphism, we have to show that $\{\alpha(S_e)\}_{e \in E(\Gamma)}$ satisfies the defining relations of $C^*(\Gamma)$. Now,
\begin{align*}
\alpha(S_e)^* \alpha(S_e)=& \sum\limits_{g \in E(\Gamma)} p_{r(g)} \otimes u_{ge}^* u_{ge} 
\end{align*}
for all $e \in E(\Gamma)$. Since $r$ is injective, we can extend $\alpha$ by defining $\alpha(p_{r(e)})=\sum\limits_{g \in E(\Gamma)} p_{r(g)} \otimes u_{ge}^* u_{ge}$ and	$\alpha(p_u)= \alpha \left( \sum\limits_{s(f)=u} S_f S_f ^* \right) = \sum\limits_{s(f)=u} ~~ \sum\limits_{k \in E(\Gamma)} S_k S_k^* \otimes u_{kf}u_{kf}^*$ for any $u \in V(\Gamma)$ with $s^{-1}(u) \neq \emptyset$.\\ Moreover, for $e \neq f$, we have $$\alpha(S_e)^* \alpha(S_f)= \sum\limits_{g \in E(\Gamma)} S_g^* S_g \otimes u_{ge}^* u_{gf}=0 .$$
These prove the existence of $\alpha$. Also, one can verify that $\alpha$ satisfies the action equation, faithfulness, Podle\'s condition and the $\tau$-preserving property. As a result, $SH_{n}^{\infty+}$ together with the action $\alpha$ is an object of $\mathfrak{C}_{\tau}^{Lin}$, which implies $(Q_{\tau}^{Lin}(\Gamma),q) \approx SH_{n}^{\infty+}$.
\end{proof}

\begin{Observation}
	 It is worth mentioning that $\Gamma \in \mathcal{I}_1$ is connected if and only if there exists some $v \in V(\Gamma) $ such that $s(e)=v$ for all $e \in E(\Gamma)$, i.e.  $\Gamma $ is isomorphic to $So_n$ (see Figure \ref{Son}). \label{EN2} \\
\end{Observation}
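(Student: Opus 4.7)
The plan is to prove both directions of the equivalence by exploiting the very rigid local structure of graphs in $\mathcal{I}_1$. The key preliminary observation is that for $\Gamma \in \mathcal{I}_1$ the range map $r: E(\Gamma) \to V(\Gamma)$ is injective (each $r(e)$ has indegree $1$), its image consists of sinks, and the no-loop condition forces $s(e) \neq r(e)$. Consequently $V(\Gamma)$ partitions into two disjoint classes: the \emph{range vertices} $R := r(E(\Gamma))$, each of which is a sink of indegree $1$, and the \emph{source vertices} $V(\Gamma) \setminus R$, each of which must emit at least one edge (since the ambient convention is that $\Gamma$ has no isolated vertices).

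For the backward direction, suppose there exists $v \in V(\Gamma)$ with $s(e) = v$ for every $e \in E(\Gamma)$. Then every range vertex $w \in R$ is the range of some edge from $v$, so $v \sim w$; and $v$ itself is connected to any vertex in $R$ directly. Since $V(\Gamma) = \{v\} \cup R$, the graph is connected and is precisely the star-shaped graph $So_n$ with a single source $v$ and $n = |E(\Gamma)|$ pendant sinks.

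For the forward direction, I would argue by contrapositive. Suppose there exist two distinct source vertices $u$ and $v$. The crucial local fact is that each range vertex $w \in R$ is a sink of indegree exactly one, hence the only vertex adjacent to $w$ is the unique $x \in V(\Gamma)$ with $x \to w$. In particular, the connected component of $u$ is contained in $\{u\} \cup \{r(e) : s(e) = u\}$, because from $u$ one can only reach its pendant sinks, and from those sinks one cannot move anywhere else. The analogous component of $v$ is disjoint from this set (as $u$ and $v$ are not in $R$, and the range vertices attached to $u$ are distinct from those attached to $v$ by injectivity of $r$ combined with the fact that $u \neq v$). Hence $\Gamma$ is disconnected, proving the contrapositive.

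The argument is essentially bookkeeping with no serious obstacle; the only point that requires care is verifying that a range vertex truly has no neighbours besides its unique in-neighbour, which follows immediately from the combination of the sink property (no outgoing edges) and indegree $1$ (exactly one incoming edge). Once this is in hand, the partition into components is forced and the equivalence drops out.
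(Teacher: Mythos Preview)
Your argument is correct. The paper states this observation without proof, treating it as self-evident; your proposal simply spells out the elementary structural bookkeeping (partition into rigid sources and pendant sinks, then observe that each rigid source together with its pendant sinks forms a connected component) that underlies it.
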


\begin{lem} \label{EL1}
	If $\Gamma$ is a graph without isolated vertices, containing a loop $l$ and an edge $e$ such that $ s(e) \neq r(e) $, then $S_{|E(\Gamma)|}$ cannot be a quantum subgroup of $(Q_{\tau}^{Lin}(\Gamma), q)$.
\end{lem}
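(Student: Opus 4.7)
The plan is to argue by contradiction, producing a classical edge-permutation that cannot extend to an automorphism of $C^*(\Gamma)$. Suppose that $S_n$, with $n = |E(\Gamma)|$, is a quantum subgroup of $(Q_{\tau}^{Lin}(\Gamma), q)$. Then there is a surjective CQG morphism $\theta : Q_{\tau}^{Lin}(\Gamma) \to C(S_n)$ sending $q_{ef}$ to $u_{ef}$ (the standard generators of $C(S_n)$, indexed by edges), so that $\beta := (id_{C^*(\Gamma)} \otimes \theta)\alpha^{univ}$ defines an action of $S_n$ on $C^*(\Gamma)$ satisfying $\beta(S_f) = \sum_{g \in E(\Gamma)} S_g \otimes u_{gf}$.

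Next, I would evaluate this action at a point: for each $\sigma \in S_n$ the map $\beta_\sigma := (id_{C^*(\Gamma)} \otimes ev_\sigma)\beta$ is a $*$-automorphism of $C^*(\Gamma)$, and the identity $u_{gf}(\sigma) = \delta_{\sigma(f),g}$ gives the explicit formula $\beta_\sigma(S_f) = S_{\sigma(f)}$ for every $f \in E(\Gamma)$. In particular, the transposition $\sigma = (l\ e)$ — a legitimate element of $S_n$ because $l \neq e$ (one being a loop and the other not) — induces an automorphism with $\beta_\sigma(S_l) = S_e$.

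To finish, I would simply square: $\beta_\sigma(S_l^2) = \beta_\sigma(S_l)^2 = S_e^2$. By Proposition \ref{properties}(iii), $S_e^2 = 0$ because $r(e) \neq s(e)$ prevents $ee$ from being a path, whereas $S_l^2 = S_{ll} \neq 0$ since $l$ is a loop. This contradicts the injectivity of $\beta_\sigma$, yielding the desired conclusion. The only mildly delicate step is the passage from the quantum-subgroup hypothesis to the explicit edge-permutation formula $\beta_\sigma(S_f) = S_{\sigma(f)}$, which relies on the fact that the fundamental representation $u$ of $S_n$ is the standard one (as noted in the Remark following the examples of CMQGs); once this is in hand, everything else reduces to the elementary observation that loops and non-loops are algebraically distinguished by whether their squares vanish in $C^*(\Gamma)$.
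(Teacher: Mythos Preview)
Your proof is correct and takes a genuinely different route from the paper's. The paper argues at the level of the quantum group: from $S_eS_e=0$ it applies $\alpha^{univ}$ to obtain $\sum_{g,h} S_gS_h\otimes q_{ge}q_{he}=0$, then invokes the linear independence of the length-two path elements (Lemma~\ref{independent}) to extract the single relation $q_{le}^2=0$; pushing this through the surjection $\phi$ onto $C(S_n)$ yields $u_{le}^2=0$, hence $u_{le}=0$ because $u_{le}$ is a projection, which is absurd. Your argument instead descends immediately to the classical action $\beta_\sigma$ and uses that a transposition exchanging $l$ and $e$ would have to send the nonzero element $S_l^2$ to $S_e^2=0$, contradicting injectivity. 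Both proofs pivot on the same algebraic obstruction---loops and non-loops are distinguished by whether their squares vanish---but the technical machinery differs: the paper needs Lemma~\ref{independent}, while you need the (standard but unstated) fact that evaluating a $C(S_n)$-coaction at a point yields a $*$-automorphism, which follows from the coaction identity together with $(id\otimes\epsilon)\beta=id$. Your approach is arguably more transparent for this particular lemma; the paper's approach has the advantage of producing an explicit relation $q_{le}^2=0$ inside $Q_\tau^{Lin}(\Gamma)$, which fits the pattern used in the subsequent Lemmas~\ref{EL2} and~\ref{EL3}.
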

\begin{proof}
 Since $ s(e) \neq r(e) $, we have $S_e S_e=0$ [by Proposition \ref{properties} (iii)]. Applying the linear action $\alpha^{univ}$, we get
 $$ \sum_{g,h \in E{(\Gamma)}} S_g S_h \otimes q_{ge} q_{he} =0 .$$
 Since $S_l S_l \neq 0$ (by Proposition \ref{properties} (iii)), using Lemma \ref{independent}, we can conclude that $q_{le}^{2}=0$.\\
 Now, suppose for a sake of contradiction that $S_n$ is a quantum subgroup of $(Q_{\tau}^{Lin}(\Gamma), q)$, then there exists a surjective CQG morphism $\phi: Q_{\tau}^{Lin}(\Gamma) \to C(S_n) $ such that $q_{ij} \to u_{ij}$, where $n=|E(\Gamma)|$ and $u=(u_{ij})_{n \times n}$ is the standard fundamental representation of $C(S_n)$.\\
 Hence, we have $\phi(q_{le}^{2})=u_{le}^{2}=0$, which implies $u_{le}=0$ (as $u_{le}$ is a projection). This leads to a contradiction.\\
\end{proof}

\begin{prop} \label{EP1}
	Let $\Gamma$ be a graph without isolated vertices, containing at least one loop. Then $S_n$ is a quantum subgroup of $(Q_{\tau}^{Lin}(\Gamma), q)$ if and only if $\Gamma$ is isomorphic to either $L_n$ (Figure \ref{Ln}) or $\sqcup_{i=1}^{n} L_1$ (Figure \ref{union L1}), where $n=|E(\Gamma)|$. 
\end{prop}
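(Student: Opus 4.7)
The plan is to prove both implications: the forward direction relies on Lemma \ref{EL1} followed by a refined linear-independence argument, while the backward direction constructs an explicit $S_n$-action satisfying all conditions of $\mathfrak{C}_{\tau}^{Lin}$ and invokes Theorem \ref{surjhom}.

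For the forward direction, let $\phi : Q_{\tau}^{Lin}(\Gamma) \to C(S_n)$ denote the surjective CQG morphism carrying $q_{ef}$ to the standard generator $u_{ef}$ of $C(S_n)$. By Lemma \ref{EL1}, since $\Gamma$ contains a loop, no edge $e$ with $s(e) \neq r(e)$ can exist, so every edge of $\Gamma$ is a loop; since $\Gamma$ has no isolated vertices, each vertex carries at least one loop. The remaining claim is that either $|V(\Gamma)| = 1$ (giving $\Gamma \cong L_n$) or every vertex carries exactly one loop (giving $\Gamma \cong \sqcup_{i=1}^n L_1$). Suppose toward contradiction that there exist distinct vertices $v_1, v_2$ with $v_1$ carrying distinct loops $l_1, l_2$ and $v_2$ carrying a loop $l'$. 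Then $r(l_1) = v_1 \neq v_2 = s(l')$ forces $S_{l_1} S_{l'} = 0$, and applying $\alpha^{univ}$ yields
\[
\sum_{\substack{g, h \in E(\Gamma) \\ r(g) = s(h)}} S_g S_h \otimes q_{g l_1} q_{h l'} = 0.
\]
The pair $(l_1, l_2)$ satisfies $r(l_1) = v_1 = s(l_2)$, so Lemma \ref{independent} gives $q_{l_1 l_1} q_{l_2 l'} = 0$, and applying $\phi$ produces $u_{l_1 l_1} u_{l_2 l'} = 0$ in $C(S_n)$. However, the transposition $(l_2\ l') \in S_n$ fixes $l_1$ and sends $l'$ to $l_2$, so evaluating $u_{l_1 l_1} u_{l_2 l'}$ at this permutation gives $1 \neq 0$, the desired contradiction.

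For the backward direction, for either $\Gamma = L_n$ or $\Gamma = \sqcup_{i=1}^n L_1$ the plan is to define $\alpha : C^*(\Gamma) \to C^*(\Gamma) \otimes C(S_n)$ by $\alpha(S_e) = \sum_{f \in E(\Gamma)} S_f \otimes u_{fe}$ and to extend it to the projections $p_v$ via the defining relations of $C^*(\Gamma)$. Well-definedness is verified by checking the Cuntz--Krieger relations pointwise: the magic-square identities $\sum_f u_{fe} = \sum_f u_{ef} = 1$ combined with row/column orthogonality (from commutativity of $C(S_n)$) produce $\alpha(S_e)^*\alpha(S_e)$ and $\sum_{s(f)=v} \alpha(S_f)\alpha(S_f)^*$ with the correct values, so $\alpha$ extends to a $*$-homomorphism. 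The action equation, Podle\'s density, and faithfulness are immediate from the bialgebra structure of $C(S_n)$, and the $\tau$-preservation collapses to the identity $\sum_g u_{ge} u_{gf} = \delta_{ef}$ on all relevant products $S_e S_f^*$. Hence $((C(S_n), \Delta), \alpha)$ is an object of $\mathfrak{C}_{\tau}^{Lin}$, and Theorem \ref{surjhom} produces the desired surjective CQG morphism $Q_{\tau}^{Lin}(\Gamma) \twoheadrightarrow C(S_n)$ exhibiting $S_n$ as a quantum subgroup.

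The main obstacle is identifying, in the forward direction, a pair $(g, h)$ whose vanishing coefficient translates into a genuinely non-trivial constraint in $C(S_n)$. The naive choice $g = h = l_1$ only recovers the tautological row-orthogonality $u_{l_1 l_1} u_{l_1 l'} = 0$ that is automatic from the magic-square structure. The hypothesis that $v_1$ carries a second loop $l_2$ is therefore essential: it allows the choice $g = l_1,\ h = l_2$, which is the unique place where the ``intermediate'' configuration collapses, and explains why $\sqcup_{i=1}^n L_1$ (with a single loop per vertex, leaving no such $l_2$ available) remains admissible.
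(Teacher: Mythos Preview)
Your forward direction is essentially identical to the paper's: both invoke Lemma~\ref{EL1} to reduce to the all-loops case, then derive the relation $q_{l_1 l_1} q_{l_2 l'} = 0$ from $S_{l_1}S_{l'}=0$ via Lemma~\ref{independent}, and evaluate its image in $C(S_n)$ at the transposition $(l_2\ l')$ to reach a contradiction.

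Your backward direction differs from the paper's. The paper does not construct the $S_n$-action directly; instead it cites Proposition~4.12 of \cite{Mandal} and Proposition~4.2 of \cite{Mandalkms}, which identify $(Q_{\tau}^{Lin}(L_n),q)\approx U_n^+$ and $(Q_{\tau}^{Lin}(\sqcup_{i=1}^{n} L_1),q)\approx H_n^{\infty+}$, and then observes that $S_n$ is a quantum subgroup of each. Your route---building the linear action $\alpha(S_e)=\sum_f S_f\otimes u_{fe}$ by hand, checking the Cuntz--Krieger relations using the magic-unitary identities, verifying $\tau$-invariance, and then invoking the universal property from Theorem~\ref{surjhom}(2)---is more self-contained and avoids those external computations. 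The trade-off is that the paper's approach simultaneously yields the exact identification of $Q_{\tau}^{Lin}(\Gamma)$, which is precisely what is needed for Corollary~\ref{Ecor1}; your argument only shows $S_n$ sits inside, so the corollary would still require the cited results. One minor point: what you actually use from Theorem~\ref{surjhom} is the universal property of $Q_{\tau}^{Lin}(\Gamma)$ (part~(2) and the subsequent remark), not the surjection from $A_{u^t}(F^\Gamma)$ in part~(1).
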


\begin{figure}[htpb]
	\begin{tikzpicture}
	\draw[fill=black] (0,0) circle (2pt) node[anchor=south]{$v_{1}$};
	\draw[black,thick](0,0.5) circle (0.5) node[above=0.25]{\rmidarrow}node[above=0.5]{$e_{11}$};
	\draw[black,thick](0,0.75) circle (0.75) node[above=0.5]{\rmidarrow} node[above=0.75]{$e_{22}$};
	\draw[black,thick](0,1.25) circle (1.25) node[above=1]{\rmidarrow}node[above=1.25]{$e_{nn}$};
	\draw[loosely dotted, black,thick] (0,2)--(0,2.5);
	\end{tikzpicture}
	\caption{$L_{n}$} \label{Ln}
\end{figure}

\begin{figure}[htpb]
\begin{tikzpicture}
\draw[fill=black] (0,0) circle (2pt) node[anchor=south]{$v_{1}$};
\draw[black,thick](0,0.5) circle (0.5) node[above=0.25]{\rmidarrow}node[above=0.5]{${\scriptscriptstyle{e_1}}$};

\draw[fill=black] (4,0) circle (2pt) node[anchor=south]{$v_{2}$};
\draw[black,thick](4,0.5) circle (0.5) node[above=0.25]{\rmidarrow}node[above=0.5]{${\scriptscriptstyle{e_2}}$};

\draw[fill=black] (12,0) circle (2pt) node[anchor=south]{$v_{n}$};
\draw[black,thick](12,0.5) circle (0.5) node[above=0.25]{\rmidarrow}node[above=0.5]{${\scriptscriptstyle{e_n}}$};

\draw[loosely dotted, black,thick] (7,0)--(9,0);
\end{tikzpicture}
\caption{$\sqcup_{i=1}^{n} L_{1}$} \label{union L1}
\end{figure}

\begin{proof}
	From Proposition 4.12 of \cite{Mandal} and Proposition 4.2 of \cite{Mandalkms}, we know that $(Q_{\tau}^{Lin}(L_n),q) \approx U_n^+$ and $(Q_{\tau}^{Lin}(\sqcup_{i=1}^{n} L_1),q) \approx H_n^{\infty+}$ respectively. In both cases, $S_n$ is a quantum subgroup of $(Q_{\tau}^{Lin}(\Gamma), q)$.\\
	
	Conversely, if $S_n$ is a quantum subgroup of $(Q_{\tau}^{Lin}(\Gamma), q)$, then there exists a surjective CQG homomorphism $\phi: Q_{\tau}^{Lin}(\Gamma) \to C(S_n) $ such that $q_{ij} \to u_{ij}$, where $n=|E(\Gamma)|$ and $u=(u_{ij})_{n \times n}$ is the standard fundamental representation of $C(S_n)$. By Lemma \ref{EL1}, we can conclude that there does not exist any $e \in E(\Gamma)$ with $s(e) \neq r(e)$; that is, all the edges in $\Gamma$ must be loops. If $|V(\Gamma)|=1$, then clearly it follows that $\Gamma \cong L_n$. If  $|V(\Gamma)| \geq 2$, then there are at least two distinct vertices $v,w \in V(\Gamma)$. Now, for the sake of contradiction, we further assume that there exist two distinct edges $e_1, e_2 \in E(\Gamma)$ such that $ s(e_i)=r(e_i)=v$ for  $i=1,2 $ and another edge $f_1 \in E(\Gamma)$ such that $s(f_1)=w$. Since $S_{e_1}S_{f_1}=0$ [by Proposition \ref{properties} (iii)], we have
	\begin{equation*}
	\sum_{g,h \in E{(\Gamma)}} S_g S_h \otimes q_{ge_1} q_{hf_1} =0 .
	\end{equation*} 
	Using the fact $S_{e_1} S_{e_2} \neq 0$ [by Proposition \ref{properties} (iii)] and Lemma \ref{independent}, one can conclude that $q_{e_1 e_1}q_{e_2 f_1}=0$. Therefore, applying $\phi$, we obtain $u_{e_1 e_1} u_{e_2 f_1}=0$ in $C(S_n)$. Now, consider the permutation matrix $g=(g_{ef})_{n \times n}$ which permutes the columns $e_2$ with $f_1$ (i.e., $g \leftrightarrow (e_2 ~~ f_1)$). Observe that $u_{e_1 e_1} u_{e_2 f_1} (g) = 1 \neq 0 $, which is a contradiction. Therefore, our earlier assumption was incorrect. Consequently, for $|V(\Gamma)|\geq 2$, each vertex can contain at most one loop, i.e. $\Gamma \cong \sqcup_{i=1}^{n} L_1$.\\
\end{proof}

\begin{cor} \label{Ecor1}
Let $\Gamma$ be a graph without isolated vertices, containing at least one loop. Then $S_n$ is a quantum subgroup of $(Q_{\tau}^{Lin}(\Gamma), q)$ if and only if $(Q_{\tau}^{Lin}(\Gamma), q)$ is identical to either $U_n^+$ or $H_n^{\infty+}$.
\end{cor}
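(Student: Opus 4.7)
The plan is to derive this essentially as a direct consequence of Proposition \ref{EP1} combined with two known identifications of quantum symmetry groups from the literature. The corollary merely repackages the classification of graphs in Proposition \ref{EP1} into a classification of the corresponding CMQGs.

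For the forward direction, suppose $S_n$ is a quantum subgroup of $(Q_{\tau}^{Lin}(\Gamma),q)$. Since $\Gamma$ is assumed to have at least one loop and no isolated vertices, Proposition \ref{EP1} applies and forces $\Gamma \cong L_n$ or $\Gamma \cong \sqcup_{i=1}^n L_1$, where $n = |E(\Gamma)|$. I would then invoke the two explicit identifications already recalled inside the proof of Proposition \ref{EP1}: Proposition 4.12 of \cite{Mandal} gives $(Q_\tau^{Lin}(L_n),q) \approx U_n^+$, and Proposition 4.2 of \cite{Mandalkms} gives $(Q_\tau^{Lin}(\sqcup_{i=1}^n L_1),q) \approx H_n^{\infty+}$. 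This immediately yields the desired conclusion.

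For the converse direction, I would observe that $S_n$ is a well-known quantum subgroup of both $U_n^+$ and $H_n^{\infty+}$. Indeed, for $U_n^+$ the quotient by the commutator ideal together with the additional relations $u_{ij}^2=u_{ij}=u_{ij}^*$ and $\sum_k u_{ik} = \sum_k u_{kj}=1$ produces $C(S_n)$; for $H_n^{\infty+} \cong C(S^1) \wr_{*} S_n^+$, the surjection onto $C(S_n)$ is inherited from the surjection $C(S^1) \wr_{*} S_n^+ \twoheadrightarrow S_n^+ \twoheadrightarrow C(S_n)$. Hence if $(Q_\tau^{Lin}(\Gamma),q)$ is identical to either of these CMQGs, composition with the corresponding surjective CQG morphism exhibits $S_n$ as a quantum subgroup.

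There is no real obstacle here, since the hard work has already been carried out in Proposition \ref{EP1}; the only task is to note that ``being $U_n^+$ or $H_n^{\infty+}$'' and ``containing $S_n$ as a quantum subgroup'' are equivalent on the two-element list $\{L_n,\sqcup_{i=1}^n L_1\}$. The proof should therefore be kept to a couple of lines, simply citing Proposition \ref{EP1} and the two references above for the forward implication and the elementary quotient construction for the converse.
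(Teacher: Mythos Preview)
Your proposal is correct and matches the paper's own proof almost verbatim: the paper simply cites Proposition 4.12 of \cite{Mandal} and Proposition 4.2 of \cite{Mandalkms} for the identifications $(Q_\tau^{Lin}(L_n),q)\approx U_n^+$ and $(Q_\tau^{Lin}(\sqcup_{i=1}^n L_1),q)\approx H_n^{\infty+}$, and then invokes Proposition \ref{EP1}. Your extra sentence justifying that $S_n$ sits inside $U_n^+$ and $H_n^{\infty+}$ is fine but was already noted in the proof of Proposition \ref{EP1}, so the paper omits it.
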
   
\begin{proof}
    We know that $(Q_{\tau}^{Lin}(L_n),q) \approx U_n^+$ (Proposition 4.12 of \cite{Mandal}) and $(Q_{\tau}^{Lin}(\sqcup_{i=1}^{n} L_{1}),q) \approx H_n^{\infty+}$ (Proposition 4.2 of \cite{Mandalkms}). Therefore, the result follows using Proposition \ref{EP1}.\\
\end{proof}	
	
\begin{lem} \label{EL2}
	Let $\Gamma$ be a graph without isolated vertices or loops, and let $n:=|E(\Gamma)| \geq 3 $. If there exists a vertex $v \in V(\Gamma)$ which is neither a sink nor a rigid source, then $S_n$ cannot be a quantum subgroup of $(Q_{\tau}^{Lin}(\Gamma), q)$.
\end{lem}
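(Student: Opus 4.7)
The plan is to argue by contradiction, in the spirit of Proposition~\ref{EP1}. Assume $S_n$ is a quantum subgroup of $(Q_{\tau}^{Lin}(\Gamma), q)$, and let $\phi : Q_{\tau}^{Lin}(\Gamma) \to C(S_n)$ be the corresponding surjective CQG morphism sending $q_{ij} \mapsto u_{ij}$, the standard fundamental representation. The template is: find a product $S_x S_y$ that vanishes in $C^*(\Gamma)$, expand $\alpha^{univ}(S_x) \alpha^{univ}(S_y) = 0$ and apply Lemma~\ref{independent} to extract a scalar relation $q_{ab} q_{cd} = 0$. Under $\phi$ this becomes $u_{ab} u_{cd} = 0$ in $C(S_n)$, and the contradiction will be to exhibit a permutation $\sigma \in S_n$ with $u_{ab}(\sigma) u_{cd}(\sigma) = 1$.

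The hypothesis on $v$ allows us to fix an outgoing edge $e$ with $s(e) = v$ and an incoming edge $f$ with $r(f) = v$; the no-loop assumption then gives $r(e) \neq v \neq s(f)$ and hence $e \neq f$, while $fe$ is a length-$2$ path, so $S_f S_e \neq 0$. Since $n \geq 3$, pick an additional edge $e_3 \in E(\Gamma) \setminus \{e, f\}$ and split on whether $r(e_3) = v$.

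In the case $r(e_3) \neq v$, we have $r(e_3) \neq s(e)$, so $S_{e_3} S_e = 0$. Expanding and restricting to the linearly independent family $\{ S_g S_h : r(g) = s(h) \}$ gives $q_{g e_3} q_{h e} = 0$ for every such $(g, h)$; specializing to $(g, h) = (f, e)$ yields $q_{f e_3} q_{e e} = 0$ and hence $u_{f e_3} u_{e e} = 0$. The transposition $\sigma = (e_3 \ f)$ sends $e_3 \mapsto f$ and fixes $e$, so $u_{f e_3}(\sigma) u_{e e}(\sigma) = 1$, contradicting the identity. In the case $r(e_3) = v$, the no-loop assumption forces $s(e_3) \neq v = r(f)$, whence $S_f S_{e_3} = 0$; the analogous expansion at $(g, h) = (f, e)$ yields $q_{f f} q_{e e_3} = 0$ and thus $u_{f f} u_{e e_3} = 0$, which is violated by the transposition $\sigma = (e_3 \ e)$, fixing $f$ and sending $e_3 \mapsto e$.

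The hard part is the choice of the vanishing product $S_x S_y$: it must vanish for the combinatorial reason $r(x) \neq s(y)$, and yet its $\alpha^{univ}$-expansion must still retain the surviving $S_f S_e$ basis term so that Lemma~\ref{independent} extracts a two-generator relation on indices that some nontrivial permutation of $S_n$ can falsify. The case split on $r(e_3)$ is engineered precisely so that in each branch one factor is a ``diagonal'' $q_{ee}$ or $q_{ff}$ (which is killed only by permutations fixing $e$ or $f$) and the other factor is a genuinely off-diagonal $q_{f e_3}$ or $q_{e e_3}$ (which a single transposition can activate), delivering the desired contradiction.
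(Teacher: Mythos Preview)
Your proof is correct and takes a genuinely different route from the paper. The paper works with the projection relation $p_v = S_{e_1}^*S_{e_1} = \sum_{j} S_{f_j}S_{f_j}^*$ (where $r^{-1}(v)=\{e_1,\dots,e_p\}$ and $s^{-1}(v)=\{f_1,\dots,f_q\}$), applies $\alpha^{univ}$, sandwiches by $S_{f_1}^*\otimes 1$ and $S_{f_1}\otimes 1$, and after simplification obtains the sum identity $\sum_{i} u_{e_i e_1} = \sum_{j} u_{f_1 f_j}$ in $C(S_n)$; it then disposes of this by a four-way case split on $(p,q)$, choosing a different permutation in each case. You instead reuse the template of Lemma~\ref{EL1} and Proposition~\ref{EP1}: a single vanishing product $S_xS_y$, Lemma~\ref{independent} to isolate one two-term relation $q_{ab}q_{cd}=0$, and a transposition to falsify $u_{ab}u_{cd}=0$. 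Your argument is shorter, avoids the $*$-sandwiching manipulation, and reduces the case analysis to two branches governed by whether the auxiliary edge $e_3$ lands in $r^{-1}(v)$; the paper's approach, on the other hand, yields a structural identity relating the $q$'s indexed by $r^{-1}(v)$ and $s^{-1}(v)$ that carries slightly more information than the single monomial relation you extract.
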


\begin{proof}
	Since $s^{-1}(v)$ and $r^{-1}(v)$ both are non-empty sets, let $r^{-1}(v)=\{e_1, e_2,..., e_p \}$ and $s^{-1}(v)=\{f_1, f_2,..., f_q \}$, where $1 \leq p,q < n$.\\
	Since	$p_v= S_{e_1}^*S_{e_1}=S_{f_1}S_{f_1}^*+S_{f_2}S_{f_2}^*+\cdots +S_{f_q}S_{f_q}^* $, applying the linear action $\alpha^{univ}$ on both sides, we get\\
	\begin{equation*}
	\sum_{k \in E(\Gamma)}S_k^*S_k \otimes q_{ke_1}^* q_{ke_1}= \sum_{k,l \in E(\Gamma)} S_k S_l^* \otimes (q_{kf_1} q_{lf_1}^*+q_{kf_2} q_{lf_2}^*+\cdots + q_{kf_q} q_{lf_q}^*).
	\end{equation*}
	Multiplying both sides by $(S_{f_1}^* \otimes 1)$ from the left and $(S_{f_1} \otimes 1)$ from the right, we obtain the following:
	\begin{equation*}
	\sum_{k \in E(\Gamma)}S_{f_1}^* S_k^*S_k S_{f_1} \otimes q_{ke_1}^* q_{ke_1}= \sum_{k,l \in E(\Gamma)} S_{f_1}^*S_k S_l^* S_{f_1} \otimes (q_{kf_1} q_{lf_1}^*+q_{kf_2} q_{lf_2}^*+\cdots + q_{kf_q} q_{lf_q}^*).
	\end{equation*}
	Using (i) of Proposition \ref{properties}, we have
	\begin{align*}
	 & \sum_{k \in E(\Gamma)} {(S_kS_{f_1})}^* {(S_kS_{f_1})} \otimes q_{ke_1}^* q_{ke_1}=  S_{f_1}^* S_{f_1} \otimes (q_{f_1f_1} q_{f_1f_1}^*+q_{f_1f_2} q_{f_1f_2}^*+\cdots + q_{f_1f_q} q_{f_1f_q}^*) \\
	\Rightarrow & \sum_{k: r(k)=s(f_1)=v} {(S_kS_{f_1})}^* {(S_kS_{f_1})} \otimes q_{ke_1}^* q_{ke_1}= S_{f_1}^* S_{f_1} \otimes (q_{f_1f_1} q_{f_1f_1}^*+q_{f_1f_2} q_{f_1f_2}^*+\cdots + q_{f_1f_q} q_{f_1f_q}^*).
	\end{align*}
	Using (v) of Proposition \ref{properties}, we know that ${(S_kS_{f_1})}^* {(S_kS_{f_1})}=p_{r(f_1)}$. Thus, we can rewrite the above equation as:
	\begin{align*}
	 & ~~ p_{r(f_1)} \otimes (q_{e_1 e_1}^* q_{e_1 e_1} + q_{e_2 e_1}^* q_{e_2 e_1}+\cdots +q_{e_p e_1}^* q_{e_p e_1} )= p_{r(f_1)} \otimes (q_{f_1f_1} q_{f_1f_1}^*+q_{f_1f_2} q_{f_1f_2}^*+\cdots + q_{f_1f_q} q_{f_1f_q}^*)\\
	\Rightarrow & ~~  (q_{e_1 e_1}^* q_{e_1 e_1} + q_{e_2 e_1}^* q_{e_2 e_1}+\cdots +q_{e_p e_1}^* q_{e_p e_1} )= (q_{f_1f_1} q_{f_1f_1}^*+q_{f_1f_2} q_{f_1f_2}^*+\cdots + q_{f_1f_q} q_{f_1f_q}^*). 
	\end{align*}
	Now, if we assume that $S_n$ is a quantum subgroup of $(Q_{\tau}^{Lin}(\Gamma), q)$, then there exists a surjective $C^{*}$-homomorphism $\phi: Q_{\tau}^{Lin}(\Gamma) \to C(S_n) $ such that $q_{ij} \to u_{ij}$, where $u=(u_{ij})_{n \times n}$ is the standard fundamental representation of $C(S_n)$. Applying $\phi$ on both sides of the above equation, we have
	\begin{equation}
	(u_{e_1 e_1} + u_{e_2 e_1}+\cdots +u_{e_p e_1} )= (u_{f_1f_1}+u_{f_1f_2}+\cdots + u_{f_1f_q} ) ~~ \text{on} ~ C(S_n). \label{EL2eq}
	\end{equation}
	Next, choose the permutation $g= \begin{cases}
	(e_2 ~~ f_1) & if ~ q=1, p>1 \\
	(e_1 ~~ f_2) & if ~ q>1, p=1 \\
	(e_2 ~~ f_1) & if ~ q>1, p>1 \\
	(e_1 ~~ e) & if ~ q=1, p=1\\
	\end{cases}$\\
	where $e \in E(\Gamma)$ with $e \neq e_1, f_1$ (such an edge $e$ exists because  $n \geq 3$). Observe that by evaluating both sides of Equation \eqref{EL2eq} on the chosen $g$, we arrive at a contradiction.\\
\end{proof}

\noindent \begin{Observation}
	Let $\Gamma$ be a graph without isolated vertices or loops. Then there exists a vertex $v \in V(\Gamma)$ which is neither a sink nor a rigid source if and only if $\Gamma$ contains a path of length 2. \\
\end{Observation}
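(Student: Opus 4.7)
The plan is to prove this observation as a short, direct equivalence, handling the two implications separately, with the ``no loops'' hypothesis used only to guarantee that the two edges produced in the forward direction are genuinely distinct.

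For the forward direction, I would assume there exists $v \in V(\Gamma)$ which is neither a sink nor a rigid source, so that both $s^{-1}(v)$ and $r^{-1}(v)$ are non-empty. Picking $e \in r^{-1}(v)$ and $f \in s^{-1}(v)$, we get $r(e) = v = s(f)$, which by Proposition \ref{properties}(iii) means that $ef$ is a path of length $2$. The absence of loops ensures $s(e) \neq r(e) = v$ and $s(f) = v \neq r(f)$, so in particular $e \neq f$ and the path is genuine.

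For the converse, suppose $\Gamma$ contains a path of length $2$, say $\gamma = e_1 e_2$, and set $v := r(e_1) = s(e_2)$. Then $e_1 \in r^{-1}(v)$ shows $r^{-1}(v) \neq \emptyset$, so $v$ is not a rigid source, while $e_2 \in s^{-1}(v)$ shows $s^{-1}(v) \neq \emptyset$, so $v$ is not a sink. Thus $v$ is the required vertex.

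There is no substantive obstacle here; the statement is a direct unpacking of definitions, and the only subtlety is the loop-free hypothesis, which is needed to guarantee that the two edges chosen in the forward direction concatenate into a length-$2$ path rather than collapsing to a single loop at $v$.
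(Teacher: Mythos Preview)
Your argument is correct, and in fact the paper states this observation without proof, treating it as an immediate consequence of the definitions. Your write-up is a faithful unpacking of exactly that.

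One small remark: your claim that the loop-free hypothesis is \emph{needed} is not quite accurate. Under the paper's definition, a path of length $2$ is any sequence $e_1 e_2$ with $r(e_1)=s(e_2)$; there is no requirement that $e_1 \neq e_2$. Hence if $\ell$ is a loop at $v$, then $\ell\ell$ is already a path of length $2$, and the equivalence holds even in the presence of loops. The hypothesis ``no loops'' is present in the observation only because that is the standing assumption in the surrounding lemmas (Lemmas~\ref{EL2} and~\ref{EL3}), not because the equivalence itself requires it. This does not affect the validity of your proof; it only means your final sentence overstates the role of that hypothesis.
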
 

\begin{lem} \label{EL3}
	Let $\Gamma$ be a graph without isolated vertices or loops, containing a sink $v \in V(\Gamma)$ with $indeg(v)=m \geq 2$, and let $n:=|E(\Gamma)| > m $. Then $S_n$ cannot be a quantum subgroup of $(Q_{\tau}^{Lin}(\Gamma), q)$.
\end{lem}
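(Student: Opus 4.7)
The plan is to extract a single quadratic relation among the generators $q_{ij}$ of $Q_{\tau}^{Lin}(\Gamma)$ which, after passage to $C(S_n)$ via the hypothesized quantum-subgroup surjection, rules out a transposition that in fact exists in $S_n$. This follows the template of Lemmas \ref{EL1} and \ref{EL2}.

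Since $n > m$, I would fix an edge $e \in E(\Gamma)$ with $r(e) \neq v$, and two distinct edges $e_1, e_2 \in r^{-1}(v)$ (available since $m \geq 2$). By Proposition \ref{properties}(iv), $r(e_1) = v \neq r(e)$ forces $S_{e_1} S_e^* = 0$. Applying the universal action $\alpha^{univ}$ and noting that only pairs $(g,h) \in E'$ contribute, I would obtain
\[
\sum_{(g,h) \in E'} S_g S_h^* \otimes q_{g e_1} q_{h e}^* = 0.
\]
The linear independence of $\{S_g S_h^* : (g,h) \in E'\}$ from Lemma 3.2 of \cite{Mandal} then yields $q_{g e_1} q_{h e}^* = 0$ for every $(g, h) \in E'$. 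Since $(e_1, e_2) \in E'$ (as $r(e_1) = r(e_2) = v$), specializing $(g,h) = (e_1, e_2)$ gives the key relation
\[
q_{e_1 e_1} \, q_{e_2 e}^* = 0.
\]

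Next, assuming $S_n$ is a quantum subgroup of $(Q_{\tau}^{Lin}(\Gamma), q)$ via the surjective CQG morphism $\phi$ with $\phi(q_{ij}) = u_{ij}$, I would apply $\phi$ and use $u_{ij}^* = u_{ij}$ in $C(S_n)$ to deduce $u_{e_1 e_1} u_{e_2 e} = 0$. To obtain the contradiction, evaluate at the transposition $\sigma = (e \ \ e_2) \in S_n$, which is well-defined since $n \geq 3$ and the three edges $e_1, e_2, e$ are pairwise distinct. Then $\sigma(e_1) = e_1$ and $\sigma(e) = e_2$ give $u_{e_1 e_1}(\sigma) = u_{e_2 e}(\sigma) = 1$, and hence $u_{e_1 e_1} u_{e_2 e}(\sigma) = 1 \neq 0$, contradicting the previous display.

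The only subtlety I anticipate is the bookkeeping: verifying that the linear independence of Lemma 3.2 of \cite{Mandal} is being invoked correctly at the tensor-product level so that one may pass from the vanishing of the sum to the vanishing of each coefficient, and confirming that the pairwise distinctness of $e_1, e_2, e$ (immediate from $e_1 \neq e_2 \in r^{-1}(v)$ and $e \notin r^{-1}(v)$) legitimizes the chosen transposition. With those checks in place, the argument is essentially a structural repeat of the earlier lemmas in this section.
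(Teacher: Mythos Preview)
Your proof is correct and takes a genuinely different route from the paper's. The paper starts from the equality $S_{e_1}^{*}S_{e_1}=S_{e_2}^{*}S_{e_2}\,(=p_v)$, applies $\alpha^{univ}$, and then multiplies by $(p_v\otimes 1)$ to isolate the relation
\[
\sum_{i=1}^{m} q_{e_i e_1}^{*}q_{e_i e_1}=\sum_{i=1}^{m} q_{e_i e_2}^{*}q_{e_i e_2},
\]
which after pushing to $C(S_n)$ still requires the extra multiplication by $u_{e_2 e_2}\cdots u_{e_m e_m}$ before a single transposition (namely $(e_1\ e)$) produces the contradiction. You instead exploit the vanishing $S_{e_1}S_e^{*}=0$ together with the linear independence of $\{S_gS_h^{*}:(g,h)\in E'\}$ (Lemma 3.2 of \cite{Mandal}) to read off the monomial relation $q_{e_1 e_1}q_{e_2 e}^{*}=0$ directly, and then a single transposition $(e\ e_2)$ finishes the job. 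Your argument is shorter, mirrors the template of Lemma~\ref{EL1} more closely, and---worth noting---never actually uses that $v$ is a sink, so it proves the slightly stronger statement in which $v$ is any vertex with $\mathrm{indeg}(v)=m\ge 2$ and $n>m$. The paper's approach, by contrast, trades the appeal to the linear-independence lemma for an explicit projection computation with $p_v$, which is more self-contained but longer.
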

\begin{proof}
	Let $r^{-1}(v)=\{e_1, e_2,..., e_m\}$ and $e \in E(\Gamma)- r^{-1}(v)$. We know that $p_v=S_{e_1}^* S_{e_1} = S_{e_2}^* S_{e_2} $.
	Applying the action $\alpha^{univ}$ on both sides of $S_{e_1}^* S_{e_1} = S_{e_2}^* S_{e_2}$, we obtain the following:
	\begin{align*}
	&\sum_{k \in E(\Gamma)}S_k^* S_k \otimes (q_{ke_1}^* q_{ke_1} -q_{ke_2}^* q_{ke_2})=0\\
	\Rightarrow &\sum_{k \in E(\Gamma)}p_{r(k)} \otimes (q_{ke_1}^* q_{ke_1} -q_{ke_2}^* q_{ke_2})=0.\\
	\end{align*}
	Next, multiplying $(p_v \otimes 1)$ from the left, we get
	$$\sum_{\{k \in E(\Gamma) : r(k)=v\}} (q_{ke_1}^* q_{ke_1} -q_{ke_2}^* q_{ke_2})=0,$$
	
	\noindent i.e., 
	\begin{equation}
	q_{e_1 e_1}^* q_{e_1 e_1}+q_{e_2 e_1}^* q_{e_2 e_1}+\cdots + q_{e_m e_1}^* q_{e_m e_1}= q_{e_1 e_2}^* q_{e_1 e_2} + q_{e_2 e_2}^* q_{e_2 e_2}+ \cdots + q_{e_m e_2}^* q_{e_m e_2} .  \label{EL3eq1}
	\end{equation}
	\noindent Now, if we assume that $S_n$ is a quantum subgroup of $(Q_{\tau}^{Lin}(\Gamma), q)$, then there exists a surjective homomorphism $\phi: Q_{\tau}^{Lin}(\Gamma) \to C(S_n) $ such that $q_{ij} \to u_{ij}$, where $u=(u_{ij})_{n \times n}$ is the standard fundamental representation of $C(S_n)$. Applying the homomorphism $\phi$ on both sides of Equation \eqref{EL3eq1}, we have 
	\begin{equation*}
	u_{e_1 e_1}+u_{e_2 e_1}+\cdots + u_{e_m e_1}= u_{e_1 e_2}+ u_{e_2 e_2}+ \cdots + u_{e_m e_2} ~~\text{ on } C(S_n).
	\end{equation*}
	Next, we multiply both sides of the above equation by $(u_{e_2 e_2}u_{e_3 e_3}...u_{e_m e_m})$. This yields the following:
	\begin{equation}
	u_{e_1 e_1}u_{e_2 e_2}u_{e_3 e_3}...u_{e_m e_m}=u_{e_2 e_2}u_{e_3 e_3}...u_{e_m e_m}  ~~\text{ on } C(S_n). \label{EL3eq2}
	\end{equation}
	By choosing the permutation $g=(e_1 ~~ e)$ and evaluating both sides of Equation \eqref{EL3eq2} on it, we find that the left-hand side (LHS) equals 0, while the right-hand side (RHS) equals 1. This leads to a contradiction.\\
\end{proof}

\begin{prop}
	Let $\Gamma$ be a graph with no isolated vertices or loops such that  $|E(\Gamma)|=n$. Then $S_n$ is a quantum subgroup of $(Q_{\tau}^{Lin}(\Gamma), q)$ if and only if $(Q_{\tau}^{Lin}(\Gamma),q)$ is identical to one of $U_n^+$, $SH_n^{\infty+}$ or $H_2^{\infty+}$ ($H_2^{\infty+}$ can only appear whenever $n=2$). \label{EP2}
\end{prop}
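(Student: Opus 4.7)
The plan is to establish both directions by combining the structural restrictions from Lemmas \ref{EL2} and \ref{EL3} with the identifications in Propositions \ref{S} and \ref{I1}. The converse direction is immediate: $S_n$ sits inside each of $U_n^+$, $SH_n^{\infty+}$, and $H_n^{\infty+}$ as a quantum subgroup, since the entries of a permutation matrix are mutually orthogonal projections and therefore automatically satisfy the defining relations (partial isometries, normality, and unitarity of the relevant matrices) of all three CMQGs.

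For the forward direction, suppose $S_n$ is a quantum subgroup of $(Q_\tau^{Lin}(\Gamma), q)$. First I would treat the case $n \geq 3$. Lemma \ref{EL2} rules out every vertex that is simultaneously the source of one edge and the range of another, so every vertex of $\Gamma$ is either a sink or a rigid source, and in particular $\Gamma$ has no path of length $2$. Lemma \ref{EL3} in turn forces that any sink with indegree at least $2$ must absorb all $n$ edges. Hence exactly one of the following holds: either every sink has indegree $1$ (so for each $e \in E(\Gamma)$ the vertex $r(e)$ is a sink of indegree $1$, placing $\Gamma \in \mathcal{I}_1$ and yielding $(Q_\tau^{Lin}(\Gamma), q) \approx SH_n^{\infty+}$ by Proposition \ref{I1}); or all $n$ edges share a common range vertex (placing $\Gamma \in \mathcal{S}$ and yielding $(Q_\tau^{Lin}(\Gamma), q) \approx U_n^+$ by Proposition \ref{S}).

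For $n=2$ the hypothesis of Lemma \ref{EL2} is violated, so I would enumerate directly. Up to isomorphism, the graphs with two edges, no loops, and no isolated vertices are: two edges sharing their range vertex (in $\mathcal{S}$, giving $U_2^+$); two edges sharing their source vertex or two disjoint edges (in $\mathcal{I}_1$, giving $SH_2^{\infty+}$); a path of length $2$, which is eliminated because the relation $S_{e_2}S_{e_1}=0$ forces $q_{e_1 e_2}q_{e_2 e_1}=0$, contradicting the image of this equation under the surjection $Q_\tau^{Lin}(\Gamma)\to C(S_2)$ when evaluated at the transposition; and the $2$-cycle on vertices $\{a,v\}$ with $e_1: a\to v$ and $e_2: v\to a$. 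In this last case, applying $\alpha^{univ}$ to the identities $p_a = S_{e_1}S_{e_1}^* = S_{e_2}^* S_{e_2}$ and $p_v = S_{e_2}S_{e_2}^* = S_{e_1}^* S_{e_1}$ produces crossed normality relations of the form $q_{ii}q_{ii}^* = q_{jj}^*q_{jj}$ which, combined with the unitarity of $q$ and $\bar q$, match the defining relations of $H_2^{\infty+}$; the reverse inclusion is then constructed as in Proposition \ref{I1}, by declaring $\alpha(S_{e_i}) = \sum_j S_{e_j} \otimes u_{ji}$ for the fundamental representation of $H_2^{\infty+}$ and verifying the Podle\'s, faithfulness, and $\tau$-preserving conditions.

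The hard part will be this $2$-cycle analysis: one must check that the crossed normality relations obtained from $\alpha^{univ}$ recover exactly $H_2^{\infty+}$ (and not a strict refinement or coarsening), and that the candidate action of $H_2^{\infty+}$ on $C^*(\text{2-cycle})$ extends to a well-defined $*$-homomorphism compatible with the Cuntz--Krieger relations.
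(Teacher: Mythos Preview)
Your proposal is correct and follows essentially the same architecture as the paper: both use Lemma~\ref{EL2} to force every vertex to be a sink or rigid source, then Lemma~\ref{EL3} to force any sink of indegree $\geq 2$ to absorb all edges, and finally invoke Propositions~\ref{S} and~\ref{I1} for the two resulting families. The only divergence is in the handling of $n=2$: the paper simply cites \cite{Mandal} (Example~4.1 for $P_2$ and Theorem~4.5 for $C_2$), whereas you rederive both facts by hand. Your $P_2$ argument via $q_{e_1e_2}q_{e_2e_1}=0$ is correct and pleasantly direct. Your $C_2$ sketch is also on the right track---since $F^{C_2}=I_2$, both $q$ and $\bar q$ are unitary, and combining the row/column relations with your crossed identities $q_{11}q_{11}^*=q_{22}^*q_{22}$, $q_{12}q_{12}^*=q_{21}^*q_{21}$ (and their companions) does yield normality of each $q_{ij}$; from there the partial-isometry condition follows---but this is precisely the content of Theorem~4.5 of \cite{Mandal}, so you are reproving a known result rather than taking a genuinely different route.
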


\begin{proof}
	We divide the proof into two cases.\\	
	\noindent {\bf Case:1} 	There exists a vertex in $V(\Gamma)$ which is neither a rigid source nor a sink.\\
	If $n=|E(\Gamma)| \geq 3 $, then $S_n$ cannot be a quantum subgroup of $(Q_{\tau}^{Lin}(\Gamma), q)$ (by Lemma \ref{EL2}). Therefore, $S_n$ is a quantum subgroup of $(Q_{\tau}^{Lin}(\Gamma), q)$ only if $|E(\Gamma)|=2$.\\
	

    \begin{figure}[htb]
    	\begin{minipage}[c]{0.4\linewidth}
    		\centering
    		\begin{tikzpicture}
    		\draw[fill=black] (0,0) circle (2pt) node[anchor=north]{$v_{1}$};
    		\draw[fill=black] (2,0) circle (2pt) node[anchor=north]{$v_{2}$};
    		\draw[fill=black] (4,0) circle (2pt) node[anchor=north]{$v_{3}$};

    		\draw[black] (0,0)-- node{\rmidarrow}  node[above]{$e_{12}$} (2,0);
    		\draw[black] (2,0)-- node{\rmidarrow}  node[above]{$e_{23}$} (4,0);
    		\end{tikzpicture}
    		\caption{$P_{2}$} \label{P2}
    	\end{minipage} \hfill  	
   \begin{minipage}[c]{0.4\linewidth}
   	\centering
   	\begin{tikzpicture}
   	\draw[fill=black] (0,0) circle (2pt) node[anchor=east]{$v_{1}$};
   	\draw[fill=black] (2,0) circle (2pt) node[anchor=west]{$v_{2}$};

   	\draw[black,thick](0,0) edge[bend left= 60] node{\rmidarrow} node[above]{$e_{12}$} (2,0);
   	\draw[black,thick](2,0) edge[bend left= 60] node{\lmidarrow} node[below]{$e_{21}$} (0,0);
   	
   	\end{tikzpicture}
   	\caption{$C_2$} \label{C2}
   \end{minipage}
    \end{figure}

	\noindent If $|E(\Gamma)|=2$, then $\Gamma$ must be isomorphic to either $P_2$ or $C_2$ (see Figures \ref{P2}, \ref{C2}).  Note that $(Q_{\tau}^{Lin}(P_2),q) \approx (C(S^1)*C(S^1),u)$ (see Example 4.1 of \cite{Mandal}), which does not contain $S_2$ as a quantum subgroup. On the other hand, $(Q_{\tau}^{Lin}(C_2),q) \approx H_2^{\infty+} $ (Theorem 4.5 of \cite{Mandal}), which contains $S_2$ as a quantum subgroup.\\
	 Therefore, $S_2$ is a quantum subgroup of $(Q_{\tau}^{Lin}(\Gamma), q)$ if and only if  $\Gamma \cong C_2$ if and only if  $(Q_{\tau}^{Lin}(\Gamma),q) \approx H_2^{\infty+}$.\\
	
	\noindent {\bf Case:2} 	Every vertex in $V(\Gamma)$ is either a rigid source or a sink. Therefore, $r(e)$ is a sink for all $e \in E(\Gamma)$.\\
	$\bullet$ If $r(e)$ is a sink of indegree 1 for all $e \in E(\Gamma)$, then by Proposition \ref{I1}, we have $(Q_{\tau}^{Lin}(\Gamma),q) \approx SH_{n}^{\infty +}$, which contains $S_n$ as a quantum subgroup.\\
	$\bullet$ If $r(e)$ is a sink of indegree $m \geq 2$ for at least one $e \in E(\Gamma)$, then we have two possible cases: \\
	(i) $|E(\Gamma)|=m$ : In this case, $\Gamma \in \mathcal{S}$. Therefore,  $(Q_{\tau}^{Lin}(\Gamma),q) \approx U_m^+$ (by Proposition \ref{S}). \\  
	(ii) $|E(\Gamma)|>m$ : Using Lemma \ref{EL3}, we can conclude that $S_n$ is not a quantum subgroup of $(Q_{\tau}^{Lin}(\Gamma), q)$. \\
	Therefore, in (Case:2),  $S_n$ is a quantum subgroup of $(Q_{\tau}^{Lin}(\Gamma), q)$ if and only if  $\Gamma \in \mathcal{S} \cup \mathcal{I}_1$ if and only if  $(Q_{\tau}^{Lin}(\Gamma),q)$ is identical to one of $U_n^+$ or $SH_{n}^{\infty+}$. 
\end{proof}

\begin{thm} \label{ET1}
	Let $\Gamma$ be a graph without isolated vertices and containing $n$ edges. Then $S_n$ is a quantum subgroup of $(Q_{\tau}^{Lin}(\Gamma), q)$ if and only if $(Q_{\tau}^{Lin}(\Gamma),q)$ is identical to one of $U_n^+$, $H_n^{\infty+}$ or $SH_n^{\infty+}$. 
\end{thm}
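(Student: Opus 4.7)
The plan is to deduce the theorem by splitting into the two mutually exclusive cases already handled separately in the excerpt: either $\Gamma$ contains at least one loop, or $\Gamma$ has no loops at all. Since $\Gamma$ has no isolated vertices by hypothesis, these two cases exhaust all possibilities.

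First I would handle the loop case by invoking Corollary \ref{Ecor1} directly: if $\Gamma$ contains at least one loop, then $S_n$ is a quantum subgroup of $(Q_{\tau}^{Lin}(\Gamma), q)$ if and only if $(Q_{\tau}^{Lin}(\Gamma), q)$ is identical to either $U_n^+$ (realized by $\Gamma \cong L_n$) or $H_n^{\infty+}$ (realized by $\Gamma \cong \sqcup_{i=1}^n L_1$). Both of these lie in the advertised list of three families.

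Next I would handle the loop-free case by invoking Proposition \ref{EP2}: if $\Gamma$ has no loops, then $S_n$ is a quantum subgroup of $(Q_{\tau}^{Lin}(\Gamma), q)$ if and only if $(Q_{\tau}^{Lin}(\Gamma), q)$ is identical to one of $U_n^+$, $SH_n^{\infty+}$, or $H_2^{\infty+}$ (the last occurring only in the degenerate situation $n=2$, realized by $\Gamma \cong C_2$). The small bookkeeping step here is to observe that $H_2^{\infty+}$ is literally the $n=2$ member of the family $H_n^{\infty+}$, so this exceptional output also fits inside the three advertised families. Combining both cases yields the forward implication.

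For the converse, one merely records that each of the three families $U_n^+$, $H_n^{\infty+}$, $SH_n^{\infty+}$ does contain $S_n$ as a quantum subgroup: for $U_n^+$ this is immediate since $S_n \subseteq S_n^+ \subseteq U_n^+$, while for $H_n^{\infty+}$ and $SH_n^{\infty+}$ the categories of partitions defining them as unitary easy quantum groups contain the partitions defining $S_n^+$, so $S_n \subseteq S_n^+$ is a quantum subgroup of each. Equivalently, the realizations by $L_n$, $\sqcup_{i=1}^n L_1$, and the $\mathcal{I}_1$-graphs from Proposition \ref{I1} already exhibit concrete graphs whose quantum symmetry group is each of the three targets, and the forward direction of Corollary \ref{Ecor1} and Proposition \ref{EP2} certify that in those cases $S_n$ appears as a quantum subgroup. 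There is no genuine obstacle here; the only nuisance is the clean organization of cases and recognizing that the $H_2^{\infty+}$ exception from Proposition \ref{EP2} is already subsumed by the $H_n^{\infty+}$ family in the statement.
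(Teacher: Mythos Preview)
Your proposal is correct and mirrors the paper's own proof exactly: the paper simply states that the result follows directly from Corollary \ref{Ecor1} and Proposition \ref{EP2}, which is precisely your case split into the loop and loop-free situations. The only content you add beyond the paper's one-line justification is the explicit observation that the exceptional $H_2^{\infty+}$ from Proposition \ref{EP2} is absorbed into the $H_n^{\infty+}$ family, and a spelled-out converse via $S_n \subset S_n^+ \subset U_n^+, H_n^{\infty+}, SH_n^{\infty+}$---both of which are implicit in the paper's terse proof.
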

\begin{proof}
	The proof follows directly from Corollary \ref{Ecor1} and Proposition \ref{EP2}.\\
\end{proof}

\begin{thm} \label{ET2}
	Let $\Gamma$ be a connected graph containing $n$ edges. Then $S_n$ is a quantum subgroup of $(Q_{\tau}^{Lin}(\Gamma), q)$ if and only if $(Q_{\tau}^{Lin}(\Gamma),q)$ is identical to one of $U_n^+$, $SH_n^{\infty+}$ or $H_2^{\infty+}$ ($H_2^{\infty+}$ can only appear whenever $n=2$).
\end{thm}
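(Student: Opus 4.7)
The plan is to derive Theorem \ref{ET2} from Theorem \ref{ET1} by intersecting the classification with the connectedness hypothesis. First, I would observe that any connected graph with $n \geq 1$ edges has no isolated vertices, so Theorem \ref{ET1} is directly applicable. Consequently, the requirement that $S_n$ be a quantum subgroup of $(Q_\tau^{Lin}(\Gamma), q)$ immediately forces $(Q_\tau^{Lin}(\Gamma), q)$ to be identical to one of $U_n^+$, $H_n^{\infty+}$ or $SH_n^{\infty+}$.

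The main step is then to trace back through the proofs of Propositions \ref{S}, \ref{I1}, \ref{EP1} and \ref{EP2} to identify, for each of these three CMQGs, the exact family of graphs that realizes it, and to test these families against the connectedness hypothesis. The family $U_n^+$ is realized by $\Gamma \cong L_n$ and by $\Gamma \in \mathcal{S}$, both of which are connected (the latter by Observation \ref{EN1}). The family $SH_n^{\infty+}$ is realized by $\Gamma \in \mathcal{I}_1$, and Observation \ref{EN2} shows that such a graph is connected precisely when $\Gamma \cong So_n$. The family $H_n^{\infty+}$ arises either from $\Gamma \cong \sqcup_{i=1}^n L_1$ (via Corollary \ref{Ecor1}), which is disconnected for $n \geq 2$, or from $\Gamma \cong C_2$ (via Case~1 of Proposition \ref{EP2}), which is connected but forces $n = 2$. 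Hence, under the connectedness assumption, the family $H_n^{\infty+}$ can appear only for $n = 2$, as $H_2^{\infty+}$.

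For the converse direction, I would exhibit the connected graphs $L_n$, $So_n$, and $C_2$ as witnesses realizing $U_n^+$, $SH_n^{\infty+}$, and $H_2^{\infty+}$ respectively, and note that $S_n$ (or $S_2$) is a quantum subgroup of each of these CMQGs. I do not anticipate any real obstacle, as the delicate analysis is entirely absorbed into Theorem \ref{ET1} and the preceding lemmas; the only genuinely new content is the bookkeeping step that singles out $\sqcup_{i=1}^n L_1$ as the unique disconnected graph in the Theorem \ref{ET1} classification, and this is precisely what shrinks $H_n^{\infty+}$ down to $H_2^{\infty+}$ in the connected setting.
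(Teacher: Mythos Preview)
Your proposal is correct and follows essentially the same approach as the paper. The only cosmetic difference is organizational: the paper splits directly into the loop/no-loop cases and invokes Propositions \ref{EP1} and \ref{EP2} together with Observations \ref{EN1} and \ref{EN2}, whereas you first pass through Theorem \ref{ET1} and then intersect the resulting graph families with the connectedness hypothesis; the underlying case analysis and the witnesses $L_n$, $So_n$, $C_2$ are identical.
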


\begin{proof}
	Again, the proof is divided into two cases.\\
	{\bf Case:1} $\Gamma$ contains a loop. Since $\Gamma$ is connected, $S_n$ is a quantum subgroup of $(Q_{\tau}^{Lin}(\Gamma), q)$ if and only if $\Gamma$ is isomorphic to $L_n$ if and only if $(Q_{\tau}^{Lin}(\Gamma),q)$ is identical to $U_n^+$ (by Proposition \ref{EP1}).\\  
	\begin{figure}[htpb]

		\begin{minipage}[c]{0.4\linewidth}
			\centering
			\begin{tikzpicture}
			\draw[fill=black] (0,0) circle (2pt) node[anchor=north]{$v$};
			\draw[fill=black] (2,0) circle (2pt) node[anchor=north]{$v_{n}$};
			\draw[fill=black] (1.414,1.414) circle (2pt) node[anchor=north]{$v_{1}$};
			\draw[fill=black] (0,2) circle (2pt) node[anchor=south]{$v_{2}$};
			\draw[fill=black] (-1.414,1.414) circle (2pt) node[anchor=north]{$v_{3}$};
			
			\draw[black] (0,0)-- node{\rmidarrow}  node[below]{$e_{n}$} (2,0);
			\draw[black] (0,0)-- node{\Nwmidarrow}  node[right]{$e_{1}$} (1.414,1.414);
			\draw[black] (0,0)-- node{\Nmidarrow}  node[right]{$e_{2}$} (0,2);
			\draw[black] (0,0)-- node{\Nemidarrow}  node[above]{$e_{3}$} (-1.414,1.414);

			\draw [loosely dotted,thick,domain=150:340] plot ({1.5*cos(\x)}, {1.5*sin(\x)});
			\end{tikzpicture}
			\caption{$So_n$} \label{Son}
		\end{minipage}
	\end{figure}

	\noindent {\bf Case:2} $\Gamma$ contains no loops. Since $\Gamma$ is connected, $S_n$ is a quantum subgroup of $(Q_{\tau}^{Lin}(\Gamma), q)$ if and only if $\Gamma \in \{C_2, So_{n}\} \cup \mathcal{S} $ (by Proposition \ref{EP2}, Observation \ref{EN1} and Observation \ref{EN2}) if and only if $(Q_{\tau}^{Lin}(\Gamma),q)$ is identical to one of the following:  $H_2^{\infty+}$ (this case can only occur whenever $n=2$), $SH_n^{\infty+}$ or $U_n^+$.\\
\end{proof}

\noindent In the table below, we describe all possible graphs without isolated vertices whose associated quantum automorphism groups  $Q_{\tau}^{Lin}(\Gamma)$ contain the permutation groups $S_{|E(\Gamma)|}$,

\begin{table}[htpb] \label{Tab}
	\begin{center}
		\begin{tabular}{|c|c|c|}
			\hline
			Graph ($\Gamma$) & $Q_{\tau}^{Lin}(\Gamma)$ & Connectedness of $\Gamma$ \\
			\hline \hline 
			$L_n$ & $U_n^+$ & connected\\
			\hline
			$\sqcup_{i=1}^{n} L_1$ & $H_n^{\infty+}$ & not connected\\
			\hline
			$C_2$ & $ H_2^{\infty+} $ &  connected\\
			\hline
			$\Gamma \in \mathcal{S}_{n}$ & $U_n^+$ &  connected\\
			\hline
			$\Gamma= So_n \in \mathcal{I}_{1}^{n}$ &  $SH_n^{\infty+} $ &  connected\\
			\hline
			$\Gamma\in \mathcal{I}_{1}^{n}$ but $\Gamma \ncong So_n$ &  $SH_n^{\infty+} $ & not connected\\
			\hline
		\end{tabular}
	\end{center}
		\caption {Graph $\Gamma$ (without isolated vertices) for which $S_n \subset (Q_{\tau}^{Lin}(\Gamma),q)$} \label{Table}
\end{table}
\noindent where $\mathcal{S}_n := \{\Gamma \in \mathcal{S} :  |E(\Gamma)|=n\}$ and  $\mathcal{I}_1^n := \{\Gamma \in \mathcal{I}_1 :  |E(\Gamma)|=n\}$. Thus, the only graph $C^*$-algebras $C^*(\Gamma)$ associated with the graphs $\Gamma$ listed in the table have maximal permutational symmetry.\\

\begin{rem}
	For a finite graph $\Gamma$ without isolated vertices and containing $n$ edges, if $(Q_{\tau}^{Lin}(\Gamma),q)$ is identical to a unitary easy group (with a fundamental representation of order $n$), then $S_n$ is a quantum subgroup of $(Q_{\tau}^{Lin}(\Gamma),q)$ \cite{easy}. As a consequence, using Theorem \ref{ET1}, one can conclude that $U_n^+$, $H_n^{\infty +}$ and $SH_n^{\infty +}$ are all the possible families of unitary easy quantum groups that can be identical to $(Q_{\tau}^{Lin}(\Gamma),q)$ associated with finite graphs $\Gamma$ without isolated vertices. In particular, for a connected graph $\Gamma$, $(Q_{\tau}^{Lin}(\Gamma),q)$ is identical to one of the following unitary easy quantum groups: $U_n^+$, $H_2^{\infty+}$ and $SH_n^{\infty+}$ (by Theorem \ref{ET2}).
\end{rem}

\begin{rem} \label{Un+}
	Let $\Gamma$ be a finite graph without isolated vertices and let $|E(\Gamma)|=n$. Then $(Q_{\tau}^{Lin}(\Gamma),q)$ is identical to $U_{n}^{+}$ if and only if $\Gamma \in \mathcal{S}_n \cup \{L_n\}$, where $\mathcal{S}_n := \{\Gamma \in \mathcal{S} :  |E(\Gamma)|=n\}$.\\
\end{rem}

In the next theorem, we establish that though the CMQG $(Q_{\tau}^{Lin}(\Gamma),q)$ is a quantum subgroup of $(A_{u^t}(F^{\Gamma}),u)$, maximal quantum symmetry only appears as $U_n^+$ identically (i.e., whenever $F^{\Gamma}$ is an invertible scalar matrix).

\begin{thm}  \label{AUtF}
	There does not exist any finite graph $\Gamma$ without isolated vertices for which $(Q_{\tau}^{Lin}(\Gamma),q) \approx (A_{u^t}(F^{\Gamma}),u)$, where $F^{\Gamma}$ is not a scalar matrix.
\end{thm}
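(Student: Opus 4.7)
The plan is to argue by contradiction. Assume $(Q_\tau^{Lin}(\Gamma), q) \approx (A_{u^t}(F^\Gamma), u)$ for some finite $\Gamma$ without isolated vertices and non-scalar $F^\Gamma$. Since $F^\Gamma$ is diagonal by Lemma \ref{Fgamma}, non-scalarity produces edges $e, f \in E(\Gamma)$ with $(F^\Gamma)_{ee} \neq (F^\Gamma)_{ff}$. Because $(F^\Gamma)_{gg}$ depends only on $r(g)$, this forces $r(e) \neq r(f)$, whence $S_e S_f^* = 0$ in $C^*(\Gamma)$ by Proposition \ref{properties}(iv).

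First, I would apply the universal action $\alpha^{univ}$ to the identity $S_e S_f^* = 0$ to obtain
\[
0 = \sum_{(k,l):\, r(k)=r(l)} S_k S_l^* \otimes q_{ke}\, q_{lf}^*,
\]
after discarding the $(k,l)$ with $r(k) \neq r(l)$ (for which $S_k S_l^* = 0$). Invoking the linear independence of the family $\{S_k S_l^* : r(k) = r(l)\}$ coming from Lemma 3.2 of \cite{Mandal}, every coefficient must vanish: $q_{ke}\, q_{lf}^* = 0$ whenever $r(k) = r(l)$. Specialising to $k = l$ yields $q_{ke}\, q_{kf}^* = 0$ for every $k \in E(\Gamma)$, which under the hypothesised identification reads $u_{ke}\, u_{kf}^* = 0$ in $A_{u^t}(F^\Gamma)$ for every $k$.

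To close the contradiction, I would exhibit even one $k$ for which $u_{ke}\, u_{kf}^*$ is actually non-zero in $A_{u^t}(F^\Gamma)$. My plan is to use the faithful Haar state $h$ on the canonical Hopf $*$-subalgebra of $A_{u^t}(F^\Gamma)$ together with the corepresentation theory of the free unitary quantum group. Each $u_{ke}\, u_{kf}^*$ is a matrix coefficient of the corepresentation $u \otimes \bar{u}$; because $e \neq f$, its projection onto the trivial summand (associated with the invariant vector $\sum_i (F^\Gamma)_{ii}^{-1} e_i \otimes \bar{e}_i$ of $u \otimes \bar{u}$, whose dual is the contraction $\sum_i F_{ii}^{-1} u_{ji} u_{li}^* = F_{ll}^{-1}\delta_{jl}$) vanishes, so the coefficient lies entirely in the non-trivial irreducible summand of $u \otimes \bar{u}$. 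Schur-type orthogonality for matrix coefficients of that irreducible then yields $h\bigl((u_{ke}\, u_{kf}^*)^*(u_{ke}\, u_{kf}^*)\bigr) > 0$ for a suitable $k$, contradicting the derived vanishing.

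The main obstacle is this final non-vanishing estimate: it does not follow directly from the defining relations of $A_{u^t}(F^\Gamma)$ and requires input from the corepresentation theory of the Van Daele--Wang free unitary quantum group -- specifically the irreducibility of the fundamental corepresentation and the explicit decomposition of $u \otimes \bar{u}$ into its trivial summand and a single non-trivial irreducible. An alternative, more hands-on route would be to produce a $*$-representation of $A_{u^t}(F^\Gamma)$ on some Hilbert space in which some $u_{ke}\, u_{kf}^*$ acts as a non-zero operator; however a preliminary check shows that the naive block-diagonal quotient coming from the eigenspace decomposition of $F^\Gamma$ already annihilates these products, so a genuinely non-commutative representation is required.
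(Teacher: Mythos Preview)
Your approach is correct and genuinely different from the paper's. The paper does not exploit the relation $S_e S_f^* = 0$; instead it observes that non-scalarity of $F^\Gamma$ forces $|s^{-1}(r(e))| \geq 2$ for some edge $e$, producing a path $eg$ of length two, and then (after a case split on whether $e$ is a loop) applies $\alpha^{univ}$ to an identity of the form $S_a S_a = 0$ for a non-loop $a$ to extract, via Lemma~\ref{independent}, a relation $q_{ab}\,q_{cb} = 0$ involving no adjoints. It then simply asserts that such a relation fails in $A_{u^t}(F^\Gamma)$.

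Both routes therefore reduce to showing that a specific quadratic monomial in the generators of $A_{u^t}(F^\Gamma)$ is non-zero: for the paper a coefficient of $u\otimes u$, for you a coefficient of $u\otimes\bar u$. The paper's choice is marginally cleaner because, by Banica's fusion rules for the free unitary quantum groups, $u\otimes u$ is already irreducible, so its matrix coefficients in the tensor basis are linearly independent and hence non-zero. Your ``main obstacle'' dissolves by the same mechanism once you note that the invariant \emph{functional} on $u\otimes\bar u$ is the trace $e_a\otimes e_b\mapsto\delta_{ab}$ (from $u^t(u^t)^*=I$), so its kernel $V_W$ carries the irreducible complement $W$; since $e_e\otimes e_f\in V_W$ for $e\neq f$ and the coordinate functional $e_a\otimes e_b\mapsto\delta_{ak}\delta_{bk}$ is not a multiple of the trace when $n\geq 2$, linear independence of the matrix coefficients of $W$ forces $u_{ke}u_{kf}^*\neq 0$ for \emph{every} $k$. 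No Haar-state computation or explicit Hilbert-space representation is needed.
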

\begin{proof}
	Since $F^{\Gamma}$ is not a scalar matrix, $F^{\Gamma}$ contains two distinct diagonal entries $\lambda_1, \lambda_2 \in \mathbb{N}$. Without loss of generality, assume that $(F^{\Gamma})_{ee} =|s^{-1}(r(e))|=\lambda_1$ with $\lambda_1 \geq 2$ for some $e \in E(\Gamma)$. \\
	
	\noindent {\bf Case 1:} Suppose that $e$ is not a loop. The condition  $|s^{-1}(r(e))|=\lambda_1 \geq 2 $ implies that there exists an edge (possibly a loop) $g \neq e $ such that $r(e)=s(g)$. In other words, either $P_2$ (Figure \ref{P2}) or $T'$ (Figure \ref{T'}) is a subgraph of $\Gamma$. In this case, since $S_e S_e=0$, we obtain $\sum\limits_{g,h \in E{(\Gamma)}} S_g S_h \otimes q_{ge} q_{he} =0$, and since $eg$ is a path in $\Gamma$ (i.e., $S_e S_g \neq 0$), it follows that 
	\begin{equation}
	q_{ee} q_{ge}=0. \label{t2eq1}
	\end{equation}    \\

		\begin{figure}[htb]
			\begin{minipage}[c]{0.4\linewidth}
				\centering
				\begin{tikzpicture}
				\draw[fill=black] (0,0) circle (2pt) node[anchor=north]{$s(g)=r(e)$};
				\draw[fill=black] (2,0) circle (2pt) node[anchor=north]{$s(e)$};
				
				\begin{scope}[decoration={markings,
					mark=at position 0.25 with {\arrow[black,thick]{<}},
				}]
				\draw[black,postaction={decorate}, thick](0,0.5) circle (0.5)  node[above=0.6]{$g$};
				\end{scope}
				
				\begin{scope}[decoration={markings,
					mark=at position 0.50 with {\arrow[black,thick]{<}},
				}]
				\draw[black,postaction={decorate}, thick ](0,0)--node[above]{$e$}  (2,0);
				\end{scope}
				
				\end{tikzpicture}
				\caption{$T'$} \label{T'}
			\end{minipage} \hfill
			\begin{minipage}[c]{0.4\linewidth}
				\centering
				\begin{tikzpicture}
				\draw[fill=black] (0,0) circle (2pt) node[anchor=north]{$r(e)=s(g)$};
				\draw[fill=black] (2,0) circle (2pt) node[anchor=north]{$r(g)$};
				
				\begin{scope}[decoration={markings,
					mark=at position 0.25 with {\arrow[black,thick]{<}},
				}]
				\draw[black,postaction={decorate}, thick](0,0.5) circle (0.5)  node[above=0.6]{$e$};
				\end{scope}
				
				\begin{scope}[decoration={markings,
					mark=at position 0.50 with {\arrow[black,thick]{>}},
				}]
				\draw[black,postaction={decorate}, thick ](0,0)--node[above]{$g$}  (2,0);
				\end{scope}
				
				\end{tikzpicture}
				\caption{$T$} \label{T}
			\end{minipage}
		\end{figure}

	\noindent {\bf Case 2:} Let $e$ be a loop. Now, $|s^{-1}(r(e))|=\lambda_1 \geq 2 $ implies that there exists an edge (possibly a loop) $g \neq e $ such that $r(e)=s(g)$. In other words, either $\Gamma$ contains $T$ (Figure \ref{T}) as a subgraph or $\Gamma$ is isomorphic to $L_{\lambda_1}$ (see Figure \ref{Ln}). Clearly,  if $\Gamma \cong L_{\lambda_1}$, then $F^{L_{\lambda_1}}$ is a scalar matrix $\lambda_1 I_n$, which contradicts the assumption that $F^{\Gamma}$ is not a scalar matrix. Lastly, if $g$ is not a loop (i.e., $\Gamma$ contains $T$ as a subgraph), then $S_g S_g=0$ and $eg$ is a path in $\Gamma$, which again implies
	\begin{equation}
	q_{eg} q_{gg}=0. \label{t2eq2}
	\end{equation}    
	In both cases, neither \eqref{t2eq1} nor \eqref{t2eq2} holds in $A_{u^t}(F^{\Gamma})$ for any $F^{\Gamma}$. So, there does not exist any $C^*$-isomorphism mapping $q_{ef} \to u_{ef}$. Therefore,
	$(Q_{\tau}^{Lin}(\Gamma),q)$ is not identical to $ (A_{u^t}(F^{\Gamma}),u)$ for any non-scalar matrix $F^{\Gamma}$.	 
\end{proof}

\begin{rem}
All the quantum automorphism groups of graph $C^*$-algebras appearing in this article are kac type. Also, in \cite{rigidity}, we have encountered a large class of graphs having quantum symmetry $(Q_{\tau}^{Lin}(\Gamma),q) \approx (\underbrace{C(S^1)*C(S^1)* \cdots *C(S^1)}_{|E(\Gamma)|}, u)$, which is again kac type. Moreover, all the quantum groups that appeared in \cite{cuntz} are kac type as well. Now, it is natural to ask whether $(Q_{\tau}^{Lin}(\Gamma),q)$ is always kac type. We expect the answer to be positive, since we have not encountered any non-kac $(Q_{\tau}^{Lin}(\Gamma),q)$ yet. On the other hand, $ (A_{u^t}(F^{\Gamma}),u)$ is non-kac type for any non-scalar matrix $F^{\Gamma}$. So, though we have proved the last theorem (Theorem \ref{AUtF}) up to identical sense, we believe that it can be extended up to CQG isomorphism.\\[0.5cm]
\end{rem}

\section*{Acknowledgements}
The first author acknowledges the financial support from Department of Science and Technology, India (DST/INSPIRE/03/2021/001688). Both the authors also acknowledge the financial support from DST-FIST (File No. SR/FST/MS-I/2019/41). We would also like to thank the anonymous referee for his/her useful comments and suggestions on the earlier version of the paper.

\section*{Data availability}
Data sharing is not applicable to this article as no data sets were generated or analysed during the current study.

\section*{Declarations}
\textbf{Conflict of interest:} On behalf of all authors, the corresponding author states that there is no conflict of interest.

\begin{minipage}[l]{0.4\linewidth}
	Ujjal Karmakar\\
	Department of Mathematics\\
	Presidency University\\ College Street, Kolkata-700073\\
	Email: \email{mathsujjal@gmail.com}
\end{minipage} \hfill
\begin{minipage}[r]{0.4\linewidth} 
	Arnab Mandal\\
	Department of Mathematics\\
	Presidency University\\ College Street, Kolkata-700073\\
	Email: \email{arnab.maths@presiuniv.ac.in}
\end{minipage}
\end{document}